\newtheorem{theorem}{Theorem}
\newtheorem{lemma}{Lemma}
\newtheorem{proposition}{Proposition}
\newtheorem{definition}{Definition}
\newtheorem{assumption}{Assumption}
\begin{document}
	
\title{\vspace{-2.6cm}
	Finite-Horizon Partially Observable Semi-Markov Games
	with  Risk Probability Criteria}
\author{{Xin Wen$^1$}, \ {Li Xia$^1$} \ {Zhihui Yu$^1$\thanks {yuzhh8@mail2.sysu.edu.cn\\This work has been supported by the National Key Research and Development Program of China (2022YFA1004600).}}  \\
	\small \vspace{-0.4cm}
	{$^1$School of Business, Sun Yat-Sen University, Guangzhou, China}\\
}
\date{}
\maketitle

\begin{abstract}
This paper studies partially observable two-person zero-sum semi-Markov games under a probability criterion, in which the 
system state may not be completely observed. 
It focuses on the probability that the accumulated rewards of player 1 (i.e., the incurred costs of player 2)  fall short of a specified target at the terminal stage, which represents the risk of player 1  and the capacity of player 2.
We study the game model via the technology of augmenting state space with the joint conditional distribution of the current unobserved state and the remaining  goal.
Under a mild condition, we establish a comparison theorem and derive the Shapley equation for the probability criterion. As a consequence, we prove the existence and the uniqueness of the value function and the existence of a Nash equilibrium. 
\end{abstract}

\textbf{Keywords:}  Semi-Markov game; partially observable; probability criterion;  comparison theory; Shapley equation; Nash equilibrium.

\section{Introduction}\label{sec1}
Stochastic game problems have received widespread attention in the literature, with the most commonly adopted optimality criteria being the (expected) discounted or average criterion \citep{shapley1953stochastic, sobel1971noncooperative, neyman2017continuous, babu2017stationary, chakrabarti2025stationary}. In these works, the dynamic programming principle holds, enabling the derivation of Shapley equations and the establishment of the existence of Nash equilibria. 
However, the expected optimality criteria are typically inadequate for reflecting the risk attitudes of decision-makers, thereby constraining its effectiveness in high-risk applications such as economic and managerial decision-making.
These limitations naturally call for a deeper exploration of risk criteria in stochastic games.

The exponential criterion is the most extensively studied risk criterion in stochastic games, known as risk-sensitive stochastic games. It employs an exponential utility function to measure the risk of accumulated rewards; see for instance, \cite{bauerle2017zero, golui2022continuous,ghosh2023discrete,zhang2025zero, chen2025zero,bhatt2025risk}. Thanks to the favorable mathematical properties of the exponential function, these works establish Shapley equations in product form, which facilitate further analysis. However, the exponential function often lacks practical interpretability and may fail to accurately reflect real-world risk. In contrast, probability serves as an effective risk measure and has been widely adopted in the fields of finance and management. Therefore, the probability criterion is a more worthwhile optimality criterion to study in the context of stochastic games,  which aims to minimize the probability that the accumulated rewards
 of player 1 (i.e., the incurred costs of player 2) fall short of a specified target within a decision horizon. Unfortunately, the probability metric is non-additive, rendering the conventional Shapley equation inapplicable to probabilistic stochastic games.  A commonly adopted remedy is the state augmentation technique, which incorporates the remaining target as an auxiliary state. This results in a Shapley equation defined over a two-dimensional state space, providing a tractable framework for analyzing the existence of the value function and a Nash equilibrium. Some excellent works can be referred to \cite{huang2017probability,bhabak2021continuous,huang2022zero}, and references therein, just to name a few.
 

Most existing literature for stochastic games focuses on complete state information, where the state of the game is fully observed by all players at each stage. However, in many practical domains—such as artificial intelligence, maintenance, signal processing, and robotics—the system state is often only partially observable. For instance, in artificial intelligence, one of the most fundamental and challenging problems arises when agents must make decisions based on incomplete information about the underlying state and events in the environment. These limitations naturally motivate the study of partially observable stochastic games (POSGs), where players do not have full access to the true system state. Incorporating partial observability is essential for modeling decision-making in real-world scenarios more accurately.
The paper by \cite{ghosh2004zero} is one of the pioneering works on POSGs, which focuses on two-person zero-sum discrete-time POSGs with discounted criterion. By introducing control policies based on observations and the conditional distributions of unobservable states, the authors transform the original POSG into an equivalent completely observable stochastic game (COSG). By following this equivalence, many other works, such as \cite{ghosh2006partially, saha2014zero, liu2022sample, yan2024partially}, have extended the results of \cite{ghosh2004zero} to various other MDP settings.
The aforementioned works primarily focus on the expected discounted or expected average criteria, without addressing risk objectives. Only recently  have \cite{bhabak2024partially} and \cite{wei2025partially} begun exploring
two-person zero-sum discrete-time POSGs under risk criteria, where \cite{bhabak2024partially} consider the expected utility criterion and \cite{wei2025partially} adopt a probability criterion. The study of POSGs with risk criteria remains in its infancy  and is largely underexplored in the existing literature.

This paper studies two-person zero-sum  partially observable semi-Markov games (POSMGs) under a risk probability  criterion.  
The objective is to minimize the probability that the accumulated rewards of player 1 (or, equivalently, the incurred costs of player 2) within a finite horizon.  In contrast to the discrete-time POSGs studied in \cite{bhabak2024partially} and \cite{wei2025partially}, which assume fixed and deterministic sojourn times, POSMGs generalize the framework by allowing sojourn times to follow arbitrary probability distributions. This generalization enables the modeling of decision epochs as random event-driven time points, thereby capturing a broader class of real-world systems where transitions occur in a stochastic and irregular manner. However, the randomness of sojourn times renders existing solution techniques for discrete-time POSGs inapplicable to the finite-horizon POSMGs. Consequently, the POSMGs remains unsolved and poses a significant and timely research challenge.

We first formulate a fairly general model of controlled POSMGs with the risk probability criterion and assume that only one of two state components of a controlled semi-Markov process can be observed. However, the payoff functions and the transition probabilities may also depend on both components, which leads to the situation that the remaining  goal at each stage
is an unobservable quantity.
Referring to the expected discounted SMGs  in \cite{ghosh2006partially},  we proceed in a similar way for the risk probability criteria by enlarging the state space. We consider the POSMGs by introducing a recursive formula to generate the joint conditional distribution of the unobserved state and the remaining goal, and then the corresponding problem with complete observation can be reformulated.
Moreover, 
under a mild condition, we present a comparison theorem for the probability criterion, and establish the existence of a solution to the Shapley equation. Moreover, using the comparison theorem and  Shapley equation developed here, we also prove the existence of the value function and a Nash equilibrium of the game.

The remainder of this paper is organized as follows: In Section \ref{sec2}, we introduce the model of finite-horizon POSMGs under the probability criterion. In Section \ref{sec3}, we reformulate the semi-Markov game by defining a filter equation and expanding the state space. Additionally, we prove that the probability measure sequence generated by the filter equation represents the conditional distribution of the unobserved state and the remaining payoff goal. In Section \ref{sec4}, we establish the comparison theorem and the Shapley equation, prove the existence and uniqueness of the solution to the Shapley equations using the comparison theorem, and demonstrate the existence of the game value and a Nash equilibrium. Finally, in Section \ref{sec5}, we conclude the paper and discuss potential directions for future research.

\section{Partially Observable Semi-Markov Games}\label{sec2}
Let $\mathbb{N}_0:=\{0,1,\ldots\}$ be the set of non-negative integers, $\mathbb{R}:=(-\infty,+\infty)$ be the real number space, $\mathbb{R}_+:=[0,\infty)$ be the non-negative real number space, $\mathscr{B}(U)$ stands for the $\sigma$-algebra on any space $U$ and $\mathscr{P}(U)$ be the family of probability measures on the $\sigma$-algebra $\mathscr{B}(U)$.

The model of POSMGs is a controlled stochastic dynamic system consisting of objects
\begin{eqnarray}\label{model}
	\{E_X\times E_Y, A, B, A(x)\in A, B(x)\in B, Q, Q_0, r\}
\end{eqnarray}
$\bullet$  $E_X$ and $E_Y$ are  the observation and unobservation state spaces, respectively, and are both the Borel spaces;
\\$\bullet$ $A$ and $B$ denote the action spaces for player 1 and player 2, respectively, which are Borel spaces;
\\$\bullet$  $A(x)$ and $B(x)$ are the available action spaces when the system state $x\in E_X$, which are assumed to be finite.  We denote by $\mathbb{K}:=\{(x,y,a,b)\mid x\in E_X, y\in E_Y,a\in A(x), b\in B(x)\}$ the set of admissible state-action
pairs;
\\$\bullet$ $Q$ is the semi-Markov kernel on $\mathbb{R}_+\times E_X\times E_Y$ given $\mathbb{K}$, which describes the reansition mechanism of POSMG.
It is assumed to satisy that: (i) $Q(\cdot, D\mid x,y,a,b)$, for each given $D\in\mathscr{B}(E_X\times E_Y)$, $(x,y,a,b)\in\mathbb{K}$, is a nondecreasing and right-continuous real function on $\mathbb{R}_+$ such that $Q(0,D\mid x,y,a,b)=0$; (ii) for each fixed $t\in \mathbb{R}_+$, $Q(t,\cdot,\cdot\mid \cdot,\cdot,\cdot,\cdot)$ is a stochastic kernel on $E_X\times E_Y$ given $\mathbb{K}$; and (iii) $Q(\infty,\cdot,\cdot\mid \cdot,\cdot,\cdot,\cdot)=\lim\limits_{n\to\infty}Q(t,\cdot,\cdot\mid \cdot,\cdot,\cdot,\cdot)$ is a stochastic kernel on $E_X\times E_Y$ given $\mathbb{K}$;
If players 1 and 2 select actions $a\in A(x)$ and $b\in B(x)$ at state pair $(x,y)\in E_X\times E_Y$, respectively, then $Q(t,D\mid x,y,a,b)$ denotes the joint probability that the sojourn time in state $(x,y)$ is not greater than $t\in\mathbb{R}_+$ and the next pair is in $D\in\mathscr{B}(E_X\times E_Y)$;
\\$\bullet$ Moreover, $Q_0$ is the initial distribution of the unobservable state $y\in E_Y$;
\\$\bullet$ $r(x,y,a,b)$ is a nonnegative real-valued measurable function on $\mathbb{K}$, which represents the reward rate for player 1 (or the cost rate for player 2).

In what follows, we assume the semi-Markov kernel $Q$ has a measure density $q$ with respect to some $\sigma$-finite measures $\eta$ and $\mu$, i.e.,
\begin{eqnarray*}
	Q(t,D\mid x,y,a,b)=\int_{D}q(t,x',y'\mid x,y,a,b)\eta(dx')\mu(dy')\quad\forall ~t\in\mathbb{R}_+, B\in\mathscr{B}(E_X\times E_Y).
\end{eqnarray*}
For convenience, we denote by
\begin{eqnarray*}
	q^X(t,x'\mid x,y,a,b):=\int_{E_X}q(t,x',y'\mid x,y,a,b)\mu(dy')
\end{eqnarray*}
the marginal transition kernel density and we further define
\begin{eqnarray*}
	Q^X(t,D\mid x,{\mu},a,b):=\int_D\int_Eq^X(t,x'\mid,x,y,a,b){\mu}(dy)\eta(dx')\quad\forall ~D\in \mathscr{B}(E_X).
\end{eqnarray*}

Now we introduce the finite-horizon POSMGs with probability criteria.
Suppose that the initial decision time is $s_0:=0$ and the system state pair is in $(x_0,y_0)\in E_X\times E_Y$ where $x_0$ is observable and $y_0$ cannot be observed, and the players have a common goal $\lambda_0\in\mathbb{R}_+$ over a planning horizon $t_0\in\mathbb{R}_+$ (that is, within the planning horizon $t_0$, player 1 tries to earn reward more than $\lambda_0$, while player 2 wants to control the cost falls short of $\lambda_0$). Then the players independently select actions $a_0\in A(x_0)$ and $b_0\in B(x_0)$, respectively.
Consequently, the system remains in $(x_0,y_0)$ until time $s_1$ and transfers to a new state pair $(x_1,y_1)\in D\in \mathscr{B}(E_X\times E_Y)$ according to the transition probability $Q(\theta_1,D\mid x_0,y_0,a_0,b_0)$ where $\theta_1:=s_1-s_0$. At time $s_1$, a payoff $r(x_0,y_0,a_0,b_0)(\theta_1\wedge t_0)$ is earned and thus the current goal is $\lambda_1:=\lambda_0-r(x_0,y_0,a_0,b_0)(\theta_1\wedge t_0)$ over the remaining planning horizon $t_1:=[t_1-\theta_1]^+$ for two players, where $[y]^+:=\max\{y,0\}$. Then player 1 takes $a_1\in A(x_1)$ and player 2 chooses $b_1\in B(x_1)$, and the same sequence of events occurs.
The game evolves in this way.
At the $(n+1)$-th decision time $s_{n+1}$, the remianing planning horizon is $t_{n+1}:=[t_n-\theta_{n+1}]^+$ where $\theta_{n+1}=s_{n+1}-s_n$ and the remaining goal is $\lambda_{n+1}:=\lambda_n-r(x_n,y_n,a_n,b_n)(\theta_{n+1}\wedge t_n)$.

To construct a mathematical model of the game described above we introduce the probability space on which all the random variable are to be defined.
The canonical sample space is defined as $\Omega:=(\mathbb{R}_+\times \mathbb{R}_+\times E_X\times E_Y\times \mathbb{R}\times A\times B)^{\infty}$.
The random variables are defined as follows:
for each $\omega=(\theta_0,t_0,x_0,y_0,\lambda_0,a_0,b_0,\ldots, \theta_n,t_n,x_n,y_n,\lambda_n,a_n,b_n,\ldots)\in\Omega$,
let $\Theta_0(\omega)=\theta_0=0$, $\Theta_{n+1}(\omega)=\theta_{n+1}$, $X_n(\omega)=x_n$, $Y_n(\omega)=y_n$, $T_n(\omega)=t_n$, $\lambda_n(\omega)=\lambda_n$, $A_n(\omega)=a_n$, $B_n(\omega)=b_n$, $S_n(\omega)=s_n=\theta_1+\theta_2+\cdots+\theta_n$,
$h_n(\omega)=(\theta_0,t_0,x_0,y_0,\lambda_0,a_0,b_0,\ldots, \theta_n,t_n,x_n,y_n,\lambda_n)$,
$i_n(\omega)=(\theta_0,t_0,x_0,a_0,b_0,\ldots, \theta_n,t_n,x_n)$,
where $s_n$ is the $n$-th decision time, $\Theta_n$ is the sojourn time between the $(n-1)$-th and the $n$-th decision time, $X_n$, $Y_n$, $\Lambda_n$, $T_n$,
$A_n$ and $B_n$ represent the observable state, the unobservable state, the remaining goal, the remaining planning horizon, the actions for player 1 and player 2 at the $n$-th  decision epoch, respectively.

Moreover, $h_n$ denotes the entire history at $n$-th decision time which is  not available to the players for decision making and $i_n$ is the observed history  information.
Let $\mathscr{H}_n$ and $\mathscr{I}_n$ be the sets of all entire histories $h_n$ and all observable histories $i_n$ which are endowed with a Borel $\sigma$-field, respectively. Moreover, given the initial distribution of unobservable state $Q_0$ and the initial goal $\lambda$, we denote $Q_0(\cdot)\delta_{\lambda}(\cdot)$ as the initial joint distribution of  the unobservable state and the payoff goal. Without generality,
based on the initial joint distribution $\mu\in \mathscr{P}(E_Y\times \mathbb{R})$, and  the observed history $i_n\in  \mathscr{I}_n$, we define the following policies for the POSMGs, which represent the action-selection rules for the players.
\begin{definition}
	(a) An admissible policy for player 1 is a sequence $\pi^1=\{\pi_n^1,n\in\mathbb{N}_0\}$ of stochastic kernel $\pi_n^1$ on $A$
	given $\mathscr{P}(E_Y\times \mathbb{R}) \times \mathscr{I}_n$ such that $\pi_n^1(A(x_n)\mid \mu,i_n)=1$.
	The set of all admissible policies of player 1 is denoted by $\Pi^1$.\\
	(b) Similarly, we define the policy $\pi^2=\{\pi_n^2,n\in\mathbb{N}_0\}$ and the family $\Pi^2$ for player 2 with $A$ and $A(x_n)$ replaced by $B$ and $B(x_n)$ , respectively.
\end{definition}

For each $(t,x,\mu)\in \mathbb{R}_+\times E_X\times \mathscr{P}(E_Y\times \mathbb{R})$ and a pair of admissible policies $(\pi^1,\pi^2)\in \Pi^1\times \Pi^2$, by the well-known Ionescu-Tulcea's theorem, there exists a unique probability measure $\mathbb{P}^{\pi^1,\pi^2}_{(t,x,\mu)}$ on $(\Omega,\mathscr{B}(\Omega))$ satisfying the following construction:
$\mathbb{P}^{\pi^1,\pi^2}_{(t,x,\mu)}$ on $\mathscr{H}_0$ is given by $\mathbb{P}^{\pi^1,\pi^2}_{(t,x,\mu)}(0,t,dy,d\lambda)=\mu(dy,d\lambda)$.
Suppose that the measure $\mathbb{P}^{\pi^1,\pi^2}_{(t,x,\mu)}$ on $\mathscr{H}_n$ has been constructed, and then
$\mathbb{P}^{\pi^1,\pi^2}_{(t,x,\mu)}$  on $\mathscr{H}_{n+1}$ is determined by the following expression
\begin{eqnarray*}
	&&\mathbb{P}^{\pi^1,\pi^2}_{(t,x,\mu)}(\Gamma\times (da,db,ds,dt,dx',dy',d\lambda)):=\int_{\Gamma}\mathbb{P}^{\pi^1,\pi^2}_{(t,x,\mu)}(dh_n)\mathbb{I}_{\{t=[t_n-s]^+\}}\mathbb{I}_{\{\lambda=\lambda_n-r(x_n,y_n,a,b)(s\wedge t_n)\}}
	\nonumber\\
	&&~~~~~~~~~~~~~~~~~~~~~~~~~~~~~~~~~~~~~~~\times	Q(ds,dx',dy'\mid x_n,y_n,a,b)\pi_n^1(da\mid \mu, i_n)\pi_n^2(db\mid \mu,i_n)dtd\lambda,
\end{eqnarray*}
where $\mathbb{I}_C$ is the indicator function on any set $C$.
The expectation operator with respect to $\mathbb{P}^{\pi^1,\pi^2}_{(t,x,\mu)}$ is denoted by $\mathbb{E}^{\pi^1,\pi^2}_{(t,x,\mu)}$. 

From the construction of probability space $(\Omega,\mathscr{B}(\Omega),\mathbb{P}^{\pi^1,\pi^2}_{(t,x,\mu)})$, we define  the continuous time state-action pair, payoff goal processes $\{X(t),Y(t),A(t),B(t),\Lambda(t),t\in\mathbb{R}_+\}$ by
\begin{eqnarray*}
	&&X(t)=\sum_{n=0}^{\infty}X_n\mathbb{I}_{\{S_n\leq t<S_{n+1}\}}+X_{\infty}\mathbb{I}_{\{t\ge S_{\infty}\}},~~ Y(t)=\sum_{n=0}^{\infty}Y_n\mathbb{I}_{\{S_n\leq t<S_{n+1}\}}+Y_{\infty}\mathbb{I}_{\{t\ge S_{\infty}\}},\\
	&&A(t)=\sum_{n=0}^{\infty}A_n\mathbb{I}_{\{S_n\leq t<S_{n+1}\}}+A_{\infty}\mathbb{I}_{\{t\ge S_{\infty}\}},~~
	B(t)=\sum_{n=0}^{\infty}B_n\mathbb{I}_{\{S_n\leq t<S_{n+1}\}}+B_{\infty}\mathbb{I}_{\{t\ge S_{\infty}\}},\\
	&&\Lambda(t)=\sum_{n=0}^{\infty}\Lambda_n\mathbb{I}_{\{S_n\leq t<S_{n+1}\}}+\Lambda_{\infty}\mathbb{I}_{\{t\ge S_{\infty}\}},
\end{eqnarray*}
where $X_{\infty}\notin E_X$, $Y_{\infty}\notin E_Y$, $A_{\infty}\notin A$ and $B_{\infty}\notin B$ denote some isolated points and $S_{\infty}:=\lim_{n\to\infty}S_n$.

Then we consider the optimality problem about the POSMGs over finite time horizon.

For each initial $(t,x,\mu)\in \mathbb{R}_+\times E_X\times \mathscr{P}(E_Y\times \mathbb{R})$ and a pair of policies $(\pi^1,\pi^2)\in \Pi^1\times \Pi^2$, the risk probability criterion of POSMGs is defined by
\begin{eqnarray}\label{probability criterion}
	F^{\pi^1,\pi^2}(t,x,\mu):=\mathbb{P}^{\pi^1,\pi^2}_{(t,x,\mu)}(\int_0^t r(X(s),Y(s),A(s),B(s))ds\leq \Lambda(0)),
\end{eqnarray}
which represents the risk for player 1 that the rewards do not reach initial profit goal and also measures the capacity of player 2 for controlling the cost do not exceed the cost level.
Note that $t$, $x$ are the initial planning horizon and the initial state, respectively, and the measure $\mu\in \mathscr{P}(E_Y\times \mathbb{R})$ represents the joint distribution of the initial unobserved state and the initial payoff goal.

We also define the Nash equilibrium and the value of the game as follows.

\begin{definition}
	(a)	A policy $\pi^{*1}\in \Pi^1$ is said to be optimal for player 1 if
	\begin{eqnarray*}
		F^{\pi^{*1},\pi^2}(t,x,\mu)\leq \underline{V}(t,x,\mu)
		:=\sup_{\pi^2\in\Pi^2}\inf_{\pi^1\in\Pi^1}F^{\pi^1,\pi^2}(t,x,\mu)
		~~\forall  (t,x,\mu)\in \mathbb{R}_+\times E_X\times \mathscr{P}(E_X\times \mathbb{R})
	\end{eqnarray*}
	for any $\pi^2\in\Pi^2$.
	The function $\underline{V}$ is reffered to as the lower value of the POSMGs.
	\\(b) Similarly, a policy $\pi^{*2}\in \Pi^2$ is said to be optimal for player 2 if
	\begin{eqnarray*}
		F^{\pi^1,\pi^{*2}}(t,x,\mu)\ge \overline{V}(t,x,\mu)
		:=\inf_{\pi^1\in\Pi^1}\sup_{\pi^2\in\Pi^2}F^{\pi^1,\pi^2}(t,x,\mu)
		~~\forall  (t,x,\mu)\in \mathbb{R}_+\times E_X\times \mathscr{P}(E_X\times \mathbb{R})
	\end{eqnarray*}
	for any $\pi^1\in\Pi^1$.
	The function $\overline{V}$ is reffered to as the upper value of the POSMGs.
	\\Clearly, $\underline{V}(t,x,\mu)\leq \overline{V}(t,x,\mu)$ for every
	$ (t,x)\in \mathbb{R}_+\times E_X$.\\
	(c) If $\pi^{*k}\in\Pi^k$ is optimal for player $k$ ($k=1,2$), then
	$(\pi^{*1},\pi^{*2})$ is called a Nash equilibrium.\\
	(d) If
	$\underline{V}(t,x,\mu)=\overline{V}(t,x,\mu)$
	for every $  (t,x,\mu)\in \mathbb{R}_+\times E_X\times \mathscr{P}(E_X\times \mathbb{R})$, then the common function is called the value of the game, which is denoted by $V$.
\end{definition}

\section{Preliminaries}\label{sec3}

Our goal here is to give the conditions for the existence of Nash equilibrium. For this purpose, we will construct an updating procedure for a probability measure $\mu$ which can generate a sequence of the conditional distributions on the unobserved state and remaining payoff goal.
We first define the following updating operator
$\Upsilon: \mathbb{R}_+\times E_X\times \mathscr{P}(E_Y\times \mathbb{R})\times A\times B \times \mathbb{R}_+\times \mathbb{R}_+\times E_X\to \mathscr{P}(E_Y\times \mathbb{R}):$
\begin{eqnarray}
&&\Upsilon(t,x,\mu,a,b,\theta,t',x')(D)\nonumber\\
&=&\frac{\int_{E_Y}\int_{\mathbb{R}}\int_{D}q(\theta,x',y'\mid x,y, a, b)\nu(dy')\delta_{\{\lambda-r(x,y,a,b)(\theta\wedge t)\}}(d\lambda')\delta_{\{[t-\theta]^+\}}(t')\mu(dy,d\lambda)}{\int_{E_Y}q^X(\theta,x'\mid x,y,a,b)\mu^Y(dy)},
\end{eqnarray}
where $D\in\mathscr{B}(E_Y\times\mathbb{R})$, $\theta\in\mathbb{R}_+$ and $\mu^Y(dy):=\mu(dy,\mathbb{R})$ is the $Y$-marginal distribution of $\mu$.
Moreover, the $\Lambda$-marginal distribution of $\mu$ is given by $\mu^{\Lambda}(d\lambda):=\mu(E_Y,d\lambda)$.
In particular, we define the updating operator only when the denominator is positive.

For each $n\in\mathbb{N}_0$, $i_n=(\theta_0,t_0,x_0,a_0,b_0,\cdots,\theta_n,t_n,x_n)$, $D\in\mathscr{B}(E_Y\times \mathbb{R})$ and the initial joint distribution $\mu\in \mathscr{P}(E_Y\times\mathbb{R})$, we define a sequence of probability measures:
\begin{eqnarray}\label{filter}
&&\mu_0(D\mid i_0):=\mu(D),\nonumber\\
&&\mu_{n+1}(D\mid i_n,a,b,\theta,t,x'):=\Upsilon(t_n,x_n,\mu_n(\cdot\mid i_n),a,b,\theta,t,x')(D).
\end{eqnarray}
The recursion is called a filter equation.

 The next theorem show that the probability measures $\mu_n$ in (\ref{filter}) is a conditional distributions of unobserved state and the remaining payoff goal.

 \begin{theorem}\label{thm1}
 Suppose $(\mu_n)$ is given by the recursion (\ref{filter}).
 For each $n\in\mathbb{N}_0$, $\pi^1=\{\pi_n^1\}\in\Pi^1$, $\pi^2=\{\pi_n^2\}\in\Pi^2$, it holds that
 \begin{eqnarray*}
 &&\mu_n(D\mid \Theta_0,T_0,X_0,A_0,B_0,\cdots,\Theta_n,T_n,X_n)\\
 &=&\mathbb{P}^{\pi^1,\pi^2}_{(t,x,\mu)}((Y_n,\Lambda_n)\in D\mid \Theta_0,T_0,X_0,A_0,B_0,\cdots,\Theta_n,T_n,X_n)~~\forall~D\in\mathscr{B} (E_Y\times\mathbb{R}).
 \end{eqnarray*}
 \end{theorem}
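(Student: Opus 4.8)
The plan is to argue by induction on $n$, using the standard characterization that $\mu_n(\cdot\mid i_n)$ is a version of the conditional distribution of $(Y_n,\Lambda_n)$ given the observable $\sigma$-field $\mathscr{G}_n:=\sigma(\Theta_0,T_0,X_0,A_0,B_0,\ldots,\Theta_n,T_n,X_n)$ if and only if, for every bounded measurable $g$ on $E_Y\times\mathbb{R}$ and every bounded measurable $w$ on $\mathscr{I}_n$,
\[
\mathbb{E}^{\pi^1,\pi^2}_{(t,x,\mu)}\big[g(Y_n,\Lambda_n)\,w(i_n)\big]=\mathbb{E}^{\pi^1,\pi^2}_{(t,x,\mu)}\Big[w(i_n)\int_{E_Y\times\mathbb{R}}g\,d\mu_n(\cdot\mid i_n)\Big].
\]
Taking $g=\mathbb{I}_D$ then yields the claimed almost-sure identity. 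The base case $n=0$ is immediate: the construction of $\mathbb{P}^{\pi^1,\pi^2}_{(t,x,\mu)}$ on $\mathscr{H}_0$ gives $(Y_0,\Lambda_0)\sim\mu$, while $(\Theta_0,T_0,X_0)=(0,t,x)$ is degenerate, so the conditional law of $(Y_0,\Lambda_0)$ is $\mu=\mu_0(\cdot\mid i_0)$.

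For the inductive step I would first condition on the full history $h_n$ and read off the one-step transition from the Ionescu--Tulcea construction: given $h_n$, the block of fresh variables $(A_n,B_n,\Theta_{n+1},X_{n+1},Y_{n+1})$ is drawn from $\pi^1_n(da\mid\mu,i_n)\,\pi^2_n(db\mid\mu,i_n)\,Q(ds,dx',dy'\mid x_n,y_n,a,b)$, while $T_{n+1}=[t_n-\Theta_{n+1}]^+$ and $\Lambda_{n+1}=\lambda_n-r(x_n,y_n,A_n,B_n)(\Theta_{n+1}\wedge t_n)$ are pinned down by the indicators. The crucial structural fact is that the policy kernels depend on $h_n$ only through the observable $i_n$ (and the fixed $\mu$), never through $(y_n,\lambda_n)$; this is what lets the filter close. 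Writing the transition via its density as $Q(ds,dx',dy'\mid\cdots)=q(s,x',y'\mid\cdots)\,\eta(dx')\,\nu(dy')\,ds$ and using the tower property together with the induction hypothesis to replace the inner conditional expectation over $(Y_n,\Lambda_n)$ by integration against $\mu_n(dy,d\lambda\mid i_n)$, I obtain for the left-hand side a single integral in which $(y,\lambda)$ is integrated against $\mu_n$ and $(a,b,s,x',y')$ against the kernels.

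Next I would expand the right-hand side analogously. Here the observable increments $(A_n,B_n,\Theta_{n+1},X_{n+1})$ appearing inside $w(i_{n+1})$ and inside $\mu_{n+1}(\cdot\mid i_{n+1})$ are governed, given $i_n$, by the marginal kernel obtained from $Q$ by integrating out $y'$; after integrating $y_n$ against $\mu_n^Y$ this marginal weight is exactly $\int_{E_Y}q^X(s,x'\mid x_n,y,a,b)\,\mu_n^Y(dy)$, which is precisely the denominator in the definition of $\Upsilon$. Substituting $\mu_{n+1}=\Upsilon(t_n,x_n,\mu_n(\cdot\mid i_n),A_n,B_n,\Theta_{n+1},T_{n+1},X_{n+1})$ makes this denominator cancel, leaving the numerator of $\Upsilon$, namely the integral of $g\big(y',\lambda-r(x_n,y,a,b)(s\wedge t_n)\big)\,q(s,x',y'\mid x_n,y,a,b)$ against $\nu(dy')\,\mu_n(dy,d\lambda)$; the horizon component is deterministic given $(\Theta_{n+1},t_n)$ and is accounted for by the delta $\delta_{\{[t_n-\Theta_{n+1}]^+\}}$, matching $T_{n+1}$. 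A Fubini rearrangement then shows this coincides term-by-term with the expression derived for the left-hand side, completing the induction.

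I expect the main obstacle to be exactly this bookkeeping of the numerator and denominator of $\Upsilon$: one must verify that the marginal observable transition weight produced by integrating $Y_{n+1}$ out of $Q$ reproduces the normalizer of the filter, so that the cancellation is legitimate. Two technical points deserve care. First, $\Upsilon$ is defined only where its denominator is positive; I would argue that the set of observation histories on which $\int_{E_Y}q^X(s,x'\mid x_n,y,a,b)\,\mu_n^Y(dy)=0$ carries zero $\mathbb{P}^{\pi^1,\pi^2}_{(t,x,\mu)}$-mass, since such increments $(s,x')$ are not generated, so the almost-sure identity is unaffected by how $\mu_{n+1}$ is assigned there. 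Second, every interchange of integration must be justified, which is harmless since it suffices to treat $g\ge 0$ and all integrands are then nonnegative, so Tonelli applies throughout.
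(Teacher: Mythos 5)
Your proposal is correct and follows essentially the same route as the paper's proof: induction on $n$ with a test-function characterization of the conditional law, expansion of both sides via the Ionescu--Tulcea construction, and the key cancellation of the filter's normalizing denominator $\int_{E_Y}q^X(\theta,x'\mid x_n,y,a,b)\,\mu_n^Y(dy)$ against the marginal observable transition weight coming from $Q^X$. Your two technical safeguards (discarding the null set of histories where the denominator vanishes, and invoking Tonelli for the interchanges) are points the paper passes over silently, but they do not alter the structure of the argument.
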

\begin{proof}
We first prove that
\begin{eqnarray} \label{thm1-1}
&&\mathbb{E}^{\pi^1,\pi^2}_{(t,x,\mu)}[v(\Theta_0,T_0,X_0,A_0,B_0,\cdots,\Theta_n,T_n,X_n,Y_n,\Lambda_n)]\nonumber\\
&=&\mathbb{E}^{\pi^1,\pi^2}_{(t,x,\mu)}[v'(\Theta_0,T_0,X_0,A_0,B_0,\cdots,\Theta_n,T_n,X_n)],
\end{eqnarray}
for all bounded and measurable functions $v:\mathscr{I}_n\times E_Y\times \mathbb{R}\to\mathbb{R}$ and
\begin{eqnarray*}
v'(i_n):=\int_{E_Y}\int_{\mathbb{R}}v(i_n,y_n,\lambda_n)\mu_n(dy_n,d\lambda_n\mid i_n).
\end{eqnarray*}
Now we prove (\ref{thm1-1}) by induction.

For $n=0$, we obtain for the left-hand side that
\begin{eqnarray*}
\mathbb{E}^{\pi^1,\pi^2}_{(t,x,\mu)}[v(\Theta_0,T_0,X_0,Y_0,\Lambda_0)]=\int_{E_Y}\int_{\mathbb{R}}v(0,t,x,y,\lambda)\mu(dy,d\lambda),
\end{eqnarray*}
and for the right-hand side that
\begin{eqnarray*}
\mathbb{E}^{\pi^1,\pi^2}_{(t,x,\mu)}[v'(\Theta_0,T_0,X_0)]=\int_{E_Y}\int_{\mathbb{R}}v(0,t,x,y,\lambda)\mu(dy,d\lambda).
\end{eqnarray*}
Thus (\ref{thm1-1}) holds when $n=0$.
We assume the statement is true for $n-1$, that is,
\begin{eqnarray*}
&&\mathbb{E}^{\pi^1,\pi^2}_{(t,x,\mu)}[v(\Theta_0,T_0,X_0,A_0,B_0,\cdots,\Theta_{n-1},T_{n-1},X_{n-1},Y_{n-1},\Lambda_{n-1})]\\
&=&\mathbb{E}^{\pi^1,\pi^2}_{(t,x,\mu)}[v'(\Theta_0,T_0,X_0,A_0,B_0,\cdots,\Theta_{n-1},T_{n-1},X_{n-1})].
\end{eqnarray*}
That means
\begin{eqnarray*}
\mathbb{E}^{\pi^1,\pi^2}_{(t,x,\mu)}[v(i_{n-1},Y_{n-1},\Lambda_{n-1})]
=\mathbb{E}^{\pi^1,\pi^2}_{(t,x,\mu)}[\int_{E_Y}\int_{\mathbb{R}}v(i_{n-1},y_{n-1},\lambda_{n-1})\mu_{n-1}(dy_{n-1},d\lambda_{n-1}\mid i_{n-1})].
\end{eqnarray*}
Thus, for any given $i_{n-1}$, we have
\begin{eqnarray*}
&&\mathbb{E}^{\pi^1,\pi^2}_{(t,x,\mu)}[v(i_{n-1},A_{n-1},B_{n-1},\Theta_n,T_n,X_n,Y_n,\Lambda_n)]\\
&=&\int_{\mathscr{I}_{n-1}}di_{n-1}\int_{E_Y}\int_{\mathbb{R}_+}
\mu_{n-1}(dy_{n-1},d\lambda_{n-1}\mid i_{n-1})\\
&&~~
\sum_{a_{n-1}\in A(x_{n-1})}\sum_{b_{n-1}\in B(x_{n-1})}
\int_{E_X}\int_{E_Y}\int_{\mathbb{R}_+}
q(d\theta_n,x_n,y_n\mid x_{n-1},y_{n-1},a_{n-1},b_{n-1})\\
&&~~
\eta(dx_n)\nu(dy_n)
\pi^1_{n-1}(da_{n-1}\mid i_{n-1})\pi^2_{n-1}(db_{n-1}\mid i_{n-1})\\
&&~~
\int_{\mathbb{R}_+}\int_{\mathbb{R}}
v(i_{n-1},a_{n-1},b_{n-1},\theta_n,t_n,x_n,y_n,\lambda_n)\\
&&~~
\delta_{\{\lambda_{n-1}-r(x_{n-1},y_{n-1},a_{n-1},b_{n-1})(\theta_n\wedge t_{n-1})\}}(d\lambda_n)\delta_{\{[t_{n-1}-\theta_n]^+\}}(dt_n)\\
&=&\int_{\mathscr{I}_{n-1}}di_{n-1}
\int_{E_Y}\int_{\mathbb{R}_+}\mu_{n-1}(dy_{n-1},d\lambda_{n-1}\mid i_{n-1})\\
&&~~
\sum_{a_{n-1}\in A(x_{n-1})}\sum_{b_{n-1}\in B(x_{n-1})}
\int_{E_X}\int_{E_Y}\int_{\mathbb{R}_+}
q(d\theta_n,x_n,y_n\mid x_{n-1},y_{n-1},a_{n-1},b_{n-1})\\
&&~~
\eta(dx_n)\nu(dy_n)
\pi^1_{n-1}(da_{n-1}\mid i_{n-1})\pi^2_{n-1}(db_{n-1}\mid i_{n-1})\\
&&~~
v(i_{n-1},a_{n-1},b_{n-1},\theta_n,[t_{n-1}-\theta_n]^+,x_n,y_n,
\lambda_{n-1}-r(x_{n-1},y_{n-1},a_{n-1},b_{n-1})(\theta_n\wedge t_{n-1}))
\end{eqnarray*}
For the right-hand side, we obtain
\begin{eqnarray*}
&&\mathbb{E}^{\pi^1,\pi^2}_{(t,x,\mu)}[v'(i_{n-1},A_{n-1},B_{n-1},\Theta_n,T_n,X_n)]\\
&=&\int_{\mathscr{I}_{n-1}}di_{n-1}
\int_{E_Y}\int_{\mathbb{R}_+}\mu_{n-1}(dy_{n-1},d\lambda_{n-1}\mid i_{n-1})\\
&&~~
\sum_{a_{n-1}\in A(x_{n-1})}\sum_{b_{n-1}\in B(x_{n-1})}
\int_{E_X}\int_{\mathbb{R}_+}
q^X(d\theta_n,x_n\mid x_{n-1},y_{n-1},a_{n-1},b_{n-1})\\
&&~~
\eta(dx_n)
\pi^1_{n-1}(da_{n-1}\mid i_{n-1})\pi^2_{n-1}(db_{n-1}\mid i_{n-1})\\
&&~~
\int_{\mathbb{R}}v'(i_{n-1},a_{n-1},b_{n-1},\theta_n,t_n,x_n)
\delta_{\{[t_{n-1}-\theta_n]^+\}}(dt_n)\\
&=&\int_{\mathscr{I}_{n-1}}di_{n-1}
\int_{E_Y}\int_{\mathbb{R}_+}\mu_{n-1}(dy_{n-1},d\lambda_{n-1}\mid i_{n-1})\\
&&~~
\sum_{a_{n-1}\in A(x_{n-1})}\sum_{b_{n-1}\in B(x_{n-1})}
\int_{E_X}\int_{E_Y}\int_{\mathbb{R}_+}
q(d\theta_n,x_n,y_n\mid x_{n-1},y_{n-1},a_{n-1},b_{n-1})\\
&&~~
\eta(dx_n)\nu(dy_n)
\pi^1_{n-1}(da_{n-1}\mid i_{n-1})\pi^2_{n-1}(db_{n-1}\mid i_{n-1})\\
&&~~
\int_{E_Y}\int_{\mathbb{R}_+}
v(i_{n-1},a_{n-1},b_{n-1},\theta_n,[t_{n-1}-\theta_n]^+,x_n,y_n,
\lambda_n)\mu_n(dy_n,d\lambda_n\mid i_n)\\
&=&\int_{\mathscr{I}_{n-1}}di_{n-1}
\int_{E_Y}\int_{\mathbb{R}_+}\mu_{n-1}(dy_{n-1},d\lambda_{n-1}\mid i_{n-1})\\
&&~~
\sum_{a_{n-1}\in A(x_{n-1})}\sum_{b_{n-1}\in B(x_{n-1})}
\int_{E_X}\int_{E_Y}\int_{\mathbb{R}_+}
q(d\theta_n,x_n,y_n\mid x_{n-1},y_{n-1},a_{n-1},b_{n-1})\\
&&~~
\eta(dx_n)\nu(dy_n)
\pi^1_{n-1}(da_{n-1}\mid i_{n-1})\pi^2_{n-1}(db_{n-1}\mid i_{n-1})\\
&&~~
\int_{E_Y}\int_{\mathbb{R}_+}
v(i_{n-1},a_{n-1},b_{n-1},\theta_n,[t_{n-1}-\theta_n]^+,x_n,y_n,
\lambda_n)\\
&&\underline{\int_{E_Y}\int_{\mathbb{R}}
q(\theta_n,x_n,y_n\mid x_{n-1},y_{n-1}, a_{n-1}, b_{n-1})\nu(dy_n)}\\
&&
\frac{\delta_{\{\lambda_{n-1}-r(x_{n-1},y_{n-1},a_{n-1},b_{n-1})(\theta_n\wedge t_{n-1})\}}(d\lambda_n) \delta_{\{[t_{n-1}-\theta_n]^+\}}(t_n)\mu_{n-1}(dy_{n-1},d\lambda_{n-1}\mid i_{n-1})}
{\int_{E_Y}q^X(\theta_n,x_n\mid x_{n-1},y_{n-1},a_{n-1},b_{n-1})\mu_{n-1}^Y(dy_{n-1}\mid h_{n-1})}\\
&=&\int_{\mathscr{I}_{n-1}}di_{n-1}
\int_{E_Y}\int_{\mathbb{R}_+}\mu_{n-1}(dy_{n-1},d\lambda_{n-1}\mid i_{n-1})\\
&&~~
\sum_{a_{n-1}\in A(x_{n-1})}\sum_{b_{n-1}\in B(x_{n-1})}
\int_{E_X}\int_{E_Y}\int_{\mathbb{R}_+}
q(d\theta_n,x_n,y_n\mid x_{n-1},y_{n-1},a_{n-1},b_{n-1})\\
&&~~
\eta(dx_n)\nu(dy_n)
\pi^1_{n-1}(da_{n-1}\mid i_{n-1})\pi^2_{n-1}(db_{n-1}\mid i_{n-1})\\
&&~~v(i_{n-1},a_{n-1},b_{n-1},\theta_n,[t_{n-1}-\theta_n]^+,x_n,y_n,
\lambda_{n-1}-r(x_{n-1},y_{n-1},a_{n-1},b_{n-1})(\theta_n\wedge t_{n-1}))\\
&&\underline{\int_{E_Y}\int_{E_Y}\int_{\mathbb{R}}
q(\theta_n,x_n,y_n\mid x_{n-1},y_{n-1}, a_{n-1}, b_{n-1})\nu(dy_n)}\\
&&
\frac{\delta_{\{\lambda_{n-1}-r(x_{n-1},y_{n-1},a_{n-1},b_{n-1})(\theta_n\wedge t_{n-1})\}}(d\lambda_n) \delta_{\{[t_{n-1}-\theta_n]^+\}}(t_n)\mu_{n-1}(dy_{n-1},d\lambda_{n-1}\mid i_{n-1})}
{\int_{E_Y}q^X(\theta_n,x_n\mid x_{n-1},y_{n-1},a_{n-1},b_{n-1})\mu_{n-1}^Y(dy_{n-1}\mid h_{n-1})}\\
&=&\int_{\mathscr{I}_{n-1}}di_{n-1}
\int_{E_Y}\int_{\mathbb{R}_+}\mu_{n-1}(dy_{n-1},d\lambda_{n-1}\mid i_{n-1})\\
&&~~
\sum_{a_{n-1}\in A(x_{n-1})}\sum_{b_{n-1}\in B(x_{n-1})}
\int_{E_X}\int_{E_Y}\int_{\mathbb{R}_+}
q(d\theta_n,x_n,y_n\mid x_{n-1},y_{n-1},a_{n-1},b_{n-1})\\
&&~~
\eta(dx_n)\nu(dy_n)
\pi^1_{n-1}(da_{n-1}\mid i_{n-1})\pi^2_{n-1}(db_{n-1}\mid i_{n-1})\\
&&~~
v(i_{n-1},a_{n-1},b_{n-1},\theta_n,[t_{n-1}-\theta_n]^+,x_n,y_n,
\lambda_{n-1}-r(x_{n-1},y_{n-1},a_{n-1},b_{n-1})(\theta_n\wedge t_{n-1})),
\end{eqnarray*}
which implies the results.
\end{proof}

To solve the optimality problem,
We also define a suitable model for semi-Markov games (SMGs) to solve POSMGs.

For this purpose we consider  SMGs with the state space $E:=E_X\times \mathscr{P}(E_Y\times\mathbb{R})$,
the action spaces $A$ and $B$.
$A(x,\mu):=A(x)$ and $B(x,\mu):=B(x)$ denote the sets of all actions available for player 1 and player 2, respectively, when the system is in state $(x,\mu)\in E$ and
$\tilde{\mathbb{K}}:=\{(x,\mu,a,b)\mid x\in E_X,\mu\in\mathscr{P}(E_Y\times\mathbb{R}),a\in A(x,\mu), b\in B(x,\mu)\}$ is the set of all measurable subsets of $E\times A\times B$.
For each
$(x,\mu)\in E$, $a\in A(x,\mu)$, $b\in B(x,\mu)$, $t\in\mathbb{R}_+$ and a Borel subset $B\subseteq E$, we  define the transition law  $\tilde{Q}$ by
\begin{eqnarray*}
	\tilde{Q}(t,B\mid x,\mu,a,b):=\int_{E_X}\mathbb{I}_B((x',\Upsilon(s,x,\mu,a,b,t,[s-t]^+,x')))
	Q^X(t,dx'\mid x,\mu^Y,a,b)\quad\forall s\in\mathbb{R}_+.
\end{eqnarray*}
Note that $\tilde{Q}$ is a semi-Markov kernel.
In the SMGs setting, based on the filter equation (\ref{filter}), we will introduce another policy set.
 \begin{definition}
(a) Let $\Phi$ denote the set of all stochastic kernel $\phi$ on $A$ given $\mathbb{R}_+\times E_X\times \mathscr{P}(E_Y\times \mathbb{R})$ such that $\phi(A(x)\mid t,x,\mu)=1$ for all $(t,x,\mu)\in \mathbb{R}_+\times E_X\times \mathscr{P}(E_Y\times \mathbb{R})$.\\
(b) A Markov policy of player 1 is a sequence $\pi^1=\{\phi_n,n\ge 0\}$ of stochastic kernel $\phi_n\in\Phi$.
 The set of all Markov policies for player 1 is denoted by $\Pi_M^1$.\\
 (c) A policy for player 1 is said to be stationary if there exists a stochastic kernel $\phi$ such that $\phi_n=\phi$ for all $n\in\mathbb{N}_0$. The set of all stationary policies for player 1 is denoted by $\Pi_S^1$.\\
(d) Similarly, with $B(x)$ in lieu of $A(x)$, we can define the following sets for player 2: the set $\Psi$ of all stochastic kernel $\psi$ on $B$ given $\mathbb{R}_+\times E_X\times \mathscr{P}(E_Y\times \mathbb{R})$ such that $\psi(B(x)\mid t,x,\mu)=1$, 
the sets $\Pi_M^2$ and $\Pi_S^2$ of Markov policies and stationary policies, respectively.
 \end{definition}
Note that we have $\Pi_M^1\subset\Pi^1$ and $\Pi_M^2\subset\Pi^2$ in the following sense: 
for every $\{\phi_n,n\ge 0\}\in \Pi_M^1$, we can fine a  $\{\pi_n^1,n\ge 0\}\in \Pi^1$ such that
\begin{eqnarray*}
&&\pi_0^1(\mu,i_0):=\phi_0(t_0,x_0,\mu)=\phi_0(t_0,x_0,\mu_0),\\
&&\pi_n^1(\mu,i_n):=\phi_n(t_n,x_n,\mu_n(\cdot\mid i_n)),~~n\ge 1,
\end{eqnarray*}
where $\{\mu_n\}$ satisfies the filter equation (\ref{filter}) and  $\mu\in \mathscr{P}(E_Y\times\mathbb{R})$ is the initial joint distribution.

Without loss of generality, we let horizon length be arbitrarily fixed with $T\in\mathbb{R}_+$.
In applications, it is natural to avoid the possibility of an infinite number of jumps during finite horizon.
For this purpose, we propose the following assumption (see Assumption 1 below) and a verification condition (see Proposition 1 below), whose proof can be similarly completed through the argument of Proposition 2.1 in \cite{huang2011finite}.
\begin{assumption}\label{ass1}
$\mathbb{P}^{\pi^1,\pi^2}_{(t,x,\mu)}(\{S_{\infty}>T\})=1$ for all $(t,x,\mu)\in \mathbb{R}_+\times E_X\times \mathscr{P}(E_Y\times \mathbb{R})$ and $(\pi^1,\pi^2)\in \Pi^1\times \Pi^2$.
\end{assumption}

\begin{proposition}
If there exists positive constants $\delta$ and $\epsilon$ such that
\begin{eqnarray*}
Q(\delta,E_X,E_Y\mid x,y,a,b)\leq 1-\epsilon ~~\forall~(x,y,a,b)\in \mathbb{K},
\end{eqnarray*}
then Assumption \ref{ass1} holds.
\end{proposition}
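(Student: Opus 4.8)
The plan is to upgrade the conclusion to the stronger almost-sure statement $\mathbb{P}^{\pi^1,\pi^2}_{(t,x,\mu)}(\{S_\infty=\infty\})=1$, which trivially implies $\mathbb{P}^{\pi^1,\pi^2}_{(t,x,\mu)}(\{S_\infty>T\})=1$ for the fixed finite horizon $T$. The hypothesis says that, uniformly over all admissible state-action pairs, the sojourn time exceeds $\delta$ with probability at least $\epsilon$; intuitively this keeps the jumps bounded away from zero often enough that their partial sums diverge. The cleanest way to capture this is through a recursive estimate of the Laplace transform of $S_n=\Theta_1+\cdots+\Theta_n$ along the history filtration generated by the Ionescu--Tulcea construction.

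First I would fix an arbitrary $\beta>0$ and bound the one-step conditional Laplace transform of the sojourn time. Writing $\mathcal{G}_n$ for the $\sigma$-algebra generated by $(h_n,A_n,B_n)$, the recursive definition of $\mathbb{P}^{\pi^1,\pi^2}_{(t,x,\mu)}$ on $\mathscr{H}_{n+1}$ shows that, conditional on $\mathcal{G}_n$, the sojourn time $\Theta_{n+1}$ is governed by the marginal $Q(ds,E_X,E_Y\mid X_n,Y_n,A_n,B_n)$. Splitting the integral at $\delta$ and using $e^{-\beta s}\leq 1$ on $[0,\delta]$, $e^{-\beta s}\leq e^{-\beta\delta}$ on $(\delta,\infty)$, together with the hypothesis $Q(\delta,E_X,E_Y\mid\cdot)\leq 1-\epsilon$, I obtain the uniform bound
\begin{equation*}
\mathbb{E}^{\pi^1,\pi^2}_{(t,x,\mu)}\bigl[e^{-\beta\Theta_{n+1}}\mid\mathcal{G}_n\bigr]\leq p+(1-p)e^{-\beta\delta}\leq 1-\epsilon\bigl(1-e^{-\beta\delta}\bigr)=:\rho,
\end{equation*}
where $p:=Q(\delta,E_X,E_Y\mid X_n,Y_n,A_n,B_n)\leq 1-\epsilon$, and $\rho\in(0,1)$ is a constant independent of $n$, of the conditioning, and of the policies.

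Next I would iterate. Since $S_n$ is a function of $h_n$ and hence $\mathcal{G}_n$-measurable, the tower property gives $\mathbb{E}^{\pi^1,\pi^2}_{(t,x,\mu)}[e^{-\beta S_{n+1}}]=\mathbb{E}^{\pi^1,\pi^2}_{(t,x,\mu)}[e^{-\beta S_n}\,\mathbb{E}^{\pi^1,\pi^2}_{(t,x,\mu)}[e^{-\beta\Theta_{n+1}}\mid\mathcal{G}_n]]\leq\rho\,\mathbb{E}^{\pi^1,\pi^2}_{(t,x,\mu)}[e^{-\beta S_n}]$, whence $\mathbb{E}^{\pi^1,\pi^2}_{(t,x,\mu)}[e^{-\beta S_n}]\leq\rho^n$ by induction. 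Because $S_n\uparrow S_\infty$, the sequence $e^{-\beta S_n}$ decreases to $e^{-\beta S_\infty}$ and is dominated by $1$, so dominated convergence yields $\mathbb{E}^{\pi^1,\pi^2}_{(t,x,\mu)}[e^{-\beta S_\infty}]=\lim_{n\to\infty}\mathbb{E}^{\pi^1,\pi^2}_{(t,x,\mu)}[e^{-\beta S_n}]\leq\lim_{n\to\infty}\rho^n=0$. As $e^{-\beta S_\infty}\geq 0$, this forces $e^{-\beta S_\infty}=0$ almost surely, i.e. $S_\infty=\infty$ almost surely, which is the desired claim.

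I expect the main obstacle to be bookkeeping rather than conceptual: one must verify carefully, straight from the construction of $\mathbb{P}^{\pi^1,\pi^2}_{(t,x,\mu)}$, that the conditional law of $\Theta_{n+1}$ given $\mathcal{G}_n$ really is the $(E_X\times E_Y)$-marginal of $Q$ evaluated at the current state and actions, so that the \emph{same} $\rho<1$ applies at every stage irrespective of the possibly history-dependent and non-stationary policies $\pi^1,\pi^2$. Once this measurability and conditioning step is secured, the geometric decay of the Laplace transforms and the passage to the limit are routine, and the argument parallels Proposition~2.1 of \cite{huang2011finite}.
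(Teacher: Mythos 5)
Your proof is correct, but it is not the route the paper takes: the paper in fact writes out no proof at all, deferring to ``the argument of Proposition 2.1 in \cite{huang2011finite}'', which is a counting estimate. In that argument one observes that on the event $\{S_n\le T\}$ at most $m:=\lfloor T/\delta\rfloor$ of the sojourn times $\Theta_1,\dots,\Theta_n$ can exceed $\delta$, so iterated conditioning with the hypothesis bounds $\mathbb{P}^{\pi^1,\pi^2}_{(t,x,\mu)}(S_n\le T)$ by a polynomial-in-$n$ number of terms, each of order $(1-\epsilon)^{n-m}$; this vanishes as $n\to\infty$, and then $\mathbb{P}^{\pi^1,\pi^2}_{(t,x,\mu)}(S_\infty\le T)=\lim_n\mathbb{P}^{\pi^1,\pi^2}_{(t,x,\mu)}(S_n\le T)=0$. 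Your Chernoff-type argument replaces this combinatorics with the exponential moment bound $\mathbb{E}^{\pi^1,\pi^2}_{(t,x,\mu)}[e^{-\beta S_n}]\le\rho^n$ with $\rho=1-\epsilon(1-e^{-\beta\delta})<1$. Both proofs rest on exactly the point you flag as the ``main obstacle'', and your reading of it is right: from the Ionescu--Tulcea construction of $\mathbb{P}^{\pi^1,\pi^2}_{(t,x,\mu)}$, the conditional law of $\Theta_{n+1}$ given $(h_n,A_n,B_n)$ is the marginal $Q(ds,E_X,E_Y\mid X_n,Y_n,A_n,B_n)$, and since $(X_n,Y_n,A_n,B_n)\in\mathbb{K}$ almost surely, the hypothesis applies at every stage uniformly in the (history-dependent, non-stationary) policies. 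As for what each approach buys: yours is shorter, gives the stronger horizon-independent conclusion $S_\infty=\infty$ a.s., and via Markov's inequality even yields the quantitative rate $\mathbb{P}^{\pi^1,\pi^2}_{(t,x,\mu)}(S_n\le T)\le e^{\beta T}\rho^n$; the counting argument is more elementary (no exponential moments needed) and is the form already on record in the semi-Markov decision process literature that the authors lean on.
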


To calculate the probability $F^{\pi^1,\pi^2}(t,x,\mu)$, we have that for all $(t,x,\mu)\in \mathbb{R}_+\times E_X\times \mathscr{P}(E_Y\times \mathbb{R})$ and $(\pi^1,\pi^2)\in \Pi^1\times \Pi^2$,
\begin{eqnarray*} F^{\pi^1,\pi^2}(t,x,\mu)&:=&\mathbb{P}^{\pi^1,\pi^2}_{(t,x,\mu)}(\int_0^t r(X(s),Y(s),A(s),B(s))ds\leq \Lambda(0))\\
&=&\mathbb{P}^{\pi^1,\pi^2}_{(t,x,\mu)}(\sum_{m=0}^{\infty}\int_{S_m\wedge t}^{S_{m+1}\wedge t} r(X(s),Y(s),A(s),B(s))ds\leq \Lambda(0))\\
&=&\mathbb{P}^{\pi^1,\pi^2}_{(t,x,\mu)}(\bigcap_{n=0}^{\infty}\bigg\{\sum_{m=0}^{\infty}\int_{S_m\wedge t}^{S_{m+1}\wedge t} r(X(s),Y(s),A(s),B(s))ds\leq \Lambda_0\bigg\})\\
&=&\lim_{n\to\infty}\mathbb{P}^{\pi^1,\pi^2}_{(t,x,\mu)}(\sum_{m=0}^{n}\int_{S_m\wedge t}^{S_{m+1}\wedge t} r(X(s),Y(s),A(s),B(s))ds\leq \Lambda_0),
\end{eqnarray*}
where the second equality follows from Assumption \ref{ass1}, the fourth equation is due to the nonnegativity of the payoff function $r$ and the continuity of probability measures.

Moreover, we define the following function sequence.
\begin{eqnarray*}
F^{\pi^1,\pi^2}_{-1}(t,x,\mu)&:=&\int_{E_Y}\int_{\mathbb{R}}\mathbb{I}_{[0,\infty)}(\lambda)\mu(dy,d\lambda),\\
F^{\pi^1,\pi^2}_{n}(t,x,\mu)&:=&\mathbb{P}^{\pi^1,\pi^2}_{(t,x,\mu)}(\sum_{m=0}^{n}\int_{S_m\wedge t}^{S_{m+1}\wedge t} r(X(s),Y(s),A(s),B(s))ds\leq \Lambda_0).
\end{eqnarray*}
Clearly, $F^{\pi^1,\pi^2}_{n}(t,x,\mu)\ge F^{\pi^1,\pi^2}_{n+1}(t,x,\mu)$ for each $(t,x,\mu)\in \mathbb{R}_+\times E_X\times \mathscr{P}(E_Y\times \mathbb{R})$, $(\pi^1,\pi^2)\in \Pi^1\times \Pi^2$ and $n\ge -1$.
Furthermore, $\lim_{n\to\infty}F^{\pi^1,\pi^2}_{n}(t,x,\mu)=F^{\pi^1,\pi^2}(t,x,\mu)$.
\section{Comparison Theorem and Shapley Equation}\label{sec4}
Before illustrating our main results, we first introduce some notation:
let $\mathscr{F}_m$ be the set of Borel-measurable functions $F:\mathbb{R}_+\times E_X\times \mathscr{P}(E_X\times \mathbb{R})\to [0,1]$.
We also define operators $T^{\varphi_1,\varphi_2}$, $T$ and $T_{\pi_1,\pi_2}$ on $\mathscr{F}_m$ as follows:
for $H\in\mathscr{F}_m$, $(t,x,\mu)\in \mathbb{R}_+\times E_X\times \mathscr{P}(E_X\times \mathbb{R})$, $\varphi_1\in\mathscr{P}(A(x))$, $\varphi_2\in\mathscr{P}(B(x))$ and $(\pi_1,\pi_2)\in \Pi_S^1\times \Pi_S^2$:
\begin{eqnarray*}
T^{\varphi_1,\varphi_2}H(t,x,\mu)&=&\sum_{a\in A(x), b\in B(x)}\varphi_1(a)\varphi_2(b)\bigg[\int_{E_Y}\int_{\mathbb{R}}
\mathbb{I}_{[0,\lambda]}(r(x,y,a,b)t)(1-Q^X(t,E_X\mid x,y,a,b))\\
&&~~~~
\times\mu(dy,d\lambda)\\
&&~~+\int_{E_X}\int_0^t H(t-u,x',\Upsilon(t,x,\mu,a,b,u,t-u,x'))
Q^X(du,dx'\mid x,\mu^Y,a,b)\bigg];\\
T^{\pi_1,\pi_2}H(t,x,\mu)&=&T^{\pi^1(\cdot\mid t,x,\mu),\pi^2(\cdot\mid t,x,\mu)}H(t,x,\mu);\\
TH(t,x,\mu)&=&\inf_{\varphi_1\in\mathscr{P}(A(x))}\sup_{\varphi_2\in\mathscr{P}(B(x))}
T^{\varphi_1,\varphi_2}H(t,x,\mu).
\end{eqnarray*}
Morover, 
for any $\pi^1=\{\phi_n,n\ge 0\}\in \Pi_M^1$ and $\pi^2=\{\psi_n,n\ge 0\}\in \Pi_M^2$, let $^{(k)}\pi^1:=\{\phi_{k+n},n\ge 0\}$ and $^{(k)}\pi^2:=\{\phi_{k+n},n\ge 0\}$ for each $k=0,1,\cdots$.
Obviously, we have $^{(k)}\pi^1\in \Pi_M^1$ and $^{(k)}\pi^2\in \Pi_M^2$, $^{(0)}\pi^1=\pi^1$ and $^{(0)}\pi^2=\pi^2$.

\begin{lemma}\label{lem1}
Under Assumption \ref{ass1}, for any $\pi^1=\{\phi_n,n\ge 0\}\in \Pi_M^1$ and $\pi^2=\{\psi_n,n\ge 0\}\in \Pi_M^2$, we have
\\(a) $F_n^{\pi^1,\pi^2}(\cdot,\cdot,\cdot)\in\mathscr{H}_m$,
$F^{\pi^1,\pi^2}(\cdot,\cdot,\cdot)\in\mathscr{H}_m$;
\\(b)
$F^{^{(k)}\pi^1,^{(k)}\pi^2}(t,x,\mu)
=T^{\phi_k,\psi_k}F^{^{(k+1)}\pi^1,^{(k+1)}\pi^2}(t,x,\mu)~
\forall k\ge 0, (t,x,\mu)\in \mathbb{R}_+\times E_X\times \mathscr{P}(E_X\times \mathbb{R})$;
\\(c)
$F^{\phi,\psi}(t,x,\mu)
=T^{\phi,\psi}F^{\phi,\psi}(t,x,\mu), \forall \phi\in\Pi_S^1, \psi\in\Pi_S^2.$;
\end{lemma}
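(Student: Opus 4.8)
The plan is to prove the three parts in the order (a), (b), (c), driven throughout by a single one-step recursion for the truncated criteria. The identity I will establish is
\[
F_n^{\pi^1,\pi^2}(t,x,\mu)=T^{\phi_0,\psi_0}F_{n-1}^{^{(1)}\pi^1,^{(1)}\pi^2}(t,x,\mu),\qquad n\ge 0,
\]
valid for every Markov pair $\pi^1=\{\phi_n\}\in\Pi_M^1$, $\pi^2=\{\psi_n\}\in\Pi_M^2$ (and, by the same computation applied to the shifted policies, for $^{(k)}\pi^1,{}^{(k)}\pi^2$), together with the base case $F_{-1}^{\pi^1,\pi^2}(t,x,\mu)=\int_{E_Y}\int_{\mathbb R}\mathbb I_{[0,\infty)}(\lambda)\mu(dy,d\lambda)$ already recorded in Section \ref{sec3}. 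Here $T^{\phi_0,\psi_0}$ is understood as $T^{\phi_0(\cdot\mid t,x,\mu),\psi_0(\cdot\mid t,x,\mu)}$, matching the definition of $T^{\pi_1,\pi_2}$.

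For (a) I would argue by induction on $n$, the claim being membership in the class $\mathscr{F}_m$ of Borel-measurable $[0,1]$-valued functions. The base function $F_{-1}$ is a fixed measurable functional of $\mu$ with values in $[0,1]$, hence lies in $\mathscr{F}_m$. For the inductive step I show that $T^{\varphi_1,\varphi_2}$ maps $\mathscr{F}_m$ into itself: its first summand is jointly measurable in $(t,x,\mu)$ because $r$, $Q^X$ and integration against $\mu$ are measurable, and its second summand is measurable because the map $(t,x,\mu,a,b,u,x')\mapsto\big(t-u,x',\Upsilon(t,x,\mu,a,b,u,t-u,x')\big)$ is measurable, so that for $H\in\mathscr{F}_m$ the composition and the integration against the stochastic kernel $Q^X(\cdot,\cdot\mid x,\mu^Y,a,b)$ preserve measurability; both summands are probabilities of disjoint events, giving the $[0,1]$ bound. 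Applying the induction hypothesis to the shifted policy together with the recursion yields $F_n^{\pi^1,\pi^2}\in\mathscr{F}_m$ for every $n$, and since $F_n^{\pi^1,\pi^2}\downarrow F^{\pi^1,\pi^2}$ pointwise (Section \ref{sec3}, under Assumption \ref{ass1}), the limit $F^{\pi^1,\pi^2}$ is a pointwise limit of $\mathscr{F}_m$-functions and therefore also lies in $\mathscr{F}_m$.

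The heart of the matter is the recursion itself, which I would derive by conditioning on the first sojourn time $\Theta_1$ and the first observable state $X_1$ via the Ionescu--Tulcea construction of $\mathbb P^{\pi^1,\pi^2}_{(t,x,\mu)}$, and splitting on $\{S_1>t\}$ versus $\{S_1\le t\}$. On $\{S_1>t\}$ the truncated reward collapses to $r(X_0,Y_0,A_0,B_0)\,t$, the constraint becomes $r(X_0,Y_0,A_0,B_0)\,t\le\Lambda_0$, and the jump-free probability is $1-Q^X(t,E_X\mid x,y,a,b)$; on $\{S_1\le t\}$ the first interval contributes $r(X_0,Y_0,A_0,B_0)\Theta_1$, leaving the residual constraint $\sum_{m=1}^n\int_{S_m\wedge t}^{S_{m+1}\wedge t}r\,ds\le\Lambda_1$ with updated goal $\Lambda_1=\Lambda_0-r(X_0,Y_0,A_0,B_0)\Theta_1$ and horizon $T_1=t-\Theta_1$. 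The crucial ingredient is Theorem \ref{thm1}: it identifies $\mu$ as the conditional law of $(Y_0,\Lambda_0)$ and $\Upsilon(t,x,\mu,a,b,\Theta_1,t-\Theta_1,X_1)$ as the conditional law of $(Y_1,\Lambda_1)$ given the observed history, so the unobservable $(y,\lambda)$ can be integrated out against $\mu$ in the first summand, and the post-jump continuation reduces, by the semi-Markov restart property, to $F_{n-1}^{^{(1)}\pi^1,^{(1)}\pi^2}$ evaluated at the new belief state $\big(t-\Theta_1,X_1,\Upsilon(\cdots)\big)$ and integrated against the observable marginal kernel $Q^X(du,dx'\mid x,\mu^Y,a,b)$. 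Reassembling these pieces reproduces exactly $T^{\phi_0,\psi_0}F_{n-1}^{^{(1)}\pi^1,^{(1)}\pi^2}$. I expect this reformulation—keeping the truncation $\Theta_1\wedge t$, the goal update and the belief update $\Upsilon$ of the filter equation (\ref{filter}) aligned while invoking the sufficiency of the belief from Theorem \ref{thm1}—to be the main obstacle; the residual measure-theoretic bookkeeping parallels the induction already carried out for Theorem \ref{thm1}.

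Given the recursion, part (b) follows by letting $n\to\infty$ in its shifted form $F_n^{^{(k)}\pi^1,^{(k)}\pi^2}=T^{\phi_k,\psi_k}F_{n-1}^{^{(k+1)}\pi^1,^{(k+1)}\pi^2}$. Since $F_{n-1}^{^{(k+1)}\pi^1,^{(k+1)}\pi^2}\downarrow F^{^{(k+1)}\pi^1,^{(k+1)}\pi^2}$ and all iterates are bounded by $1$, dominated convergence lets me pass the limit through the finite measure $Q^X(du,dx'\mid x,\mu^Y,a,b)$ inside $T^{\phi_k,\psi_k}$, while the first summand is independent of $n$; this gives $F^{^{(k)}\pi^1,^{(k)}\pi^2}=T^{\phi_k,\psi_k}F^{^{(k+1)}\pi^1,^{(k+1)}\pi^2}$. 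Part (c) is then immediate: for a stationary pair $(\phi,\psi)\in\Pi_S^1\times\Pi_S^2$ one has $\phi_k=\phi$, $\psi_k=\psi$ and $^{(k)}\pi^1=\pi^1$, $^{(k)}\pi^2=\pi^2$ for all $k$, so taking $k=0$ in (b) yields $F^{\phi,\psi}=T^{\phi,\psi}F^{\phi,\psi}$, with $T^{\phi,\psi}=T^{\phi(\cdot\mid t,x,\mu),\psi(\cdot\mid t,x,\mu)}$ by definition.
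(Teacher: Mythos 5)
Your proposal is correct and takes essentially the same route as the paper: both rest on the one-step recursion $F_{n+1}^{^{(k)}\pi^1,^{(k)}\pi^2}(t,x,\mu)=T^{\phi_k,\psi_k}F_n^{^{(k+1)}\pi^1,^{(k+1)}\pi^2}(t,x,\mu)$, derived by conditioning on the first transition, splitting on $\{S_1>t\}$ versus $\{S_1\le t\}$, and recognizing the filter update $\Upsilon$ in the post-jump continuation, after which (b) follows by letting $n\to\infty$ with dominated convergence and (c) is immediate for stationary pairs. The only cosmetic differences are that you make the measurability induction in (a) explicit and invoke Theorem \ref{thm1} for the belief identification, where the paper carries out the equivalent multiply-and-divide manipulation with the normalizer $\int_{E_Y}q^X(u,x'\mid x,y,a,b)\mu^Y(dy)$ in-line.
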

\begin{proof}
(a) For any $(\pi^1,\pi^2)\in \Pi_M^1\times\Pi_M^2$, we have
\begin{eqnarray*}
&&F_{n+1}^{^{(k)}\pi^1,^{(k)}\pi^2}(t,x,\mu)
=\mathbb{P}^{^{(k)}\pi^1,^{(k)}\pi^2}_{(t,x,\mu)}(\sum_{m=0}^{n+1}\int_{S_m\wedge t}^{S_{m+1}\wedge t} r(X(s),Y(s),A(s),B(s))ds\leq \Lambda_0)\\
&=&\mathbb{P}^{^{(k)}\pi^1,^{(k)}\pi^2}_{(t,x,\mu)}\big[
\mathbb{I}_{\{\int_{0}^{t} r(X(s),Y(s),A(s),B(s))ds\leq \Lambda_0,S_1>t\}}\\
&&+\mathbb{I}_{\{\sum_{m=1}^{n+1}\int_{S_m\wedge t}^{S_{m+1}\wedge t} r(X(s),Y(s),A(s),B(s))ds\leq \Lambda_0-r(X_0,Y_0,A_0,B_0)(S_1-S_0),S_1\leq t\}}\big]\\
&=&\sum_{a\in A(x), b\in B(x)}\phi_k(a\mid t,x,\mu)\psi_k(b\mid t,x,\mu)\\
&&~
\bigg[\int_{E_Y}\int_{\mathbb{R}}
\mathbb{P}^{^{(k)}\pi^1,^{(k)}\pi^2}_{(t,x,\mu)}
(\int_{0}^{t} r(X(s),Y(s),A(s),B(s))ds\leq \Lambda_0,\\
&&~~S_1>t\mid S_0=0,T_0=t,X_0=x,Y_0=y,\Lambda_0=\lambda)\mu(dy,d\lambda)\\
&&+\int_{E_Y}\int_{\mathbb{R}}\int_{E_X}\int_0^t
\mathbb{P}^{^{(k)}\pi^1,^{(k)}\pi^2}_{(t,x,\mu)}
(\sum_{m=1}^{n+1}\int_{S_m\wedge t}^{S_{m+1}\wedge t} r(X(s),Y(s),A(s),B(s))ds\\
&&\leq \Lambda_0-r(X_0,Y_0,A_0,B_0)(S_1-S_0),S_1\leq t \mid S_0=0,T_0=t,\\
&&~~X_0=x,Y_0=y,\Lambda_0=\lambda,A_0=a,B_0=b,S_1=u,T_1=t-u,X_1=x')\\
&&~~Q^X(du,dx'\mid x,y,a,b)\mu(dy,d\lambda)\bigg]\\
&=&\sum_{a\in A(x), b\in B(x)}\phi_k(a\mid t,x,\mu)\psi_k(b\mid t,x,\mu)
\bigg[\int_{E_Y}\int_{\mathbb{R}}\mathbb{I}_{[0,\infty)}(\lambda-r(x,y,a,b)t)\\
&&~~(1-Q^X(t,E_X\mid x,y,a,b))\mu(dy,d\lambda)\\
&&+\int_{E_Y}\int_{\mathbb{R}}\int_{E_X}\int_{E_Y}\int_{\mathbb{R}}\int_0^t
\mathbb{P}^{^{(k)}\pi^1,^{(k)}\pi^2}_{(t,x,\mu)}
(\sum_{m=1}^{n+1}\int_{(S_m-u)\wedge (t-u)}^{(S_{m+1}-u)\wedge (t-u)} r(X(s+u),Y(s+u),\\
&&~~A(s+u),B(s+u))ds\leq \lambda-r(x,y,a,b)u\mid S_0=0,T_0=t,X_0=x,Y_0=y,\Lambda_0=\lambda,\\
&&A_0=a,B_0=b,S_1=u,T_1=t-u,X_1=x',Y_1=y',\Lambda_1=\lambda')\\
&&~~\delta_{\{\lambda-r(x,y,a,b)u\}}(d\lambda')\delta_{\{[t-u]^+\}}(t-u)Q(du,dx',dy'\mid x,y,a,b)\mu(dy,d\lambda)\bigg]\\
&=&\sum_{a\in A(x), b\in B(x)}\phi_k(a\mid t,x,\mu)\psi_k(b\mid t,x,\mu)
\bigg[\int_{E_Y}\int_{\mathbb{R}}\mathbb{I}_{[0,\infty)}(\lambda-r(x,y,a,b)t)\\
&&~~(1-Q^X(t,E_X\mid x,y,a,b))\mu(dy,d\lambda)\\
&&+\int_{E_Y}\int_{\mathbb{R}}\int_{E_X}\int_0^t
\mathbb{P}^{^{(k)}\pi^1,^{(k)}\pi^2}_{(t,x,\mu)}(\sum_{m=1}^{n+1}\int_{(S_m-u)\wedge (t-u)}^{(S_{m+1}-u)\wedge (t-u)} r(X(s+u),Y(s+u),\\
&&~~A(s+u),B(s+u))ds\leq \lambda-r(x,y,a,b)u\mid S_0=0,T_0=t,X_0=x,Y_0=y,\Lambda_0=\lambda,\\
&&A_0=a,B_0=b,S_1=u,T_1=t-u,X_1=x',Y_1=y',\Lambda_1=\lambda')\\
&&~~\int_{E_Y}\int_{\mathbb{R}}\delta_{\{\lambda-r(x,y,a,b)u\}}(d\lambda')
\delta_{\{[t-u]^+\}}(t-u)q(du,dx',dy'\mid x,y,a,b)\eta(dx')\nu(dy')\mu(dy,d\lambda)\bigg]\\
&=&\sum_{a\in A(x), b\in B(x)}\phi_k(a\mid t,x,\mu)\psi_k(b\mid t,x,\mu)
\bigg[\int_{E_Y}\int_{\mathbb{R}}\mathbb{I}_{[0,\infty)}(\lambda-r(x,y,a,b)t)\\
&&~~(1-Q^X(t,E_X\mid x,y,a,b))\mu(dy,d\lambda)\\
&&+\int_{E_Y}\int_{\mathbb{R}}\int_{E_X}\int_0^t
\mathbb{P}^{^{(k)}\pi^1,^{(k)}\pi^2}_{(t,x,\mu)}(\sum_{m=1}^{n+1}\int_{(S_m-u)\wedge (t-u)}^{(S_{m+1}-u)\wedge (t-u)} r(X(s+u),Y(s+u),\\
&&~~A(s+u),B(s+u))ds\leq \lambda-r(x,y,a,b)u\mid S_0=0,T_0=t,X_0=x,Y_0=y,\Lambda_0=\lambda,\\
&&A_0=a,B_0=b,S_1=u,T_1=t-u,X_1=x',Y_1=y',\Lambda_1=\lambda')\\
&&\frac{\int_{E_Y}\int_{\mathbb{R}}q(du,dx',dy'\mid x,y,a,b)\nu(dy')\delta_{\{\lambda-r(x,y,a,b)u\}}(d\lambda')
\delta_{\{[t-u]^+\}}(t-u)\mu(dy,d\lambda)}
{\int_{E_Y}q^X(u,x'\mid x,y,a,b)\mu^Y(dy)}\\
&&~~\times \int_{E_Y}q^X(du,x'\mid x,y,a,b)\mu^Y(dy)\eta(dx')\bigg]\\
&=&\sum_{a\in A(x), b\in B(x)}\phi_k(a\mid t,x,\mu)\psi_k(b\mid t,x,\mu)
\bigg[\int_{E_Y}\int_{\mathbb{R}}\mathbb{I}_{[0,\lambda]}(r(x,y,a,b)t)\\
&&~~(1-Q^X(t,E_X\mid x,y,a,b))\mu(dy,d\lambda)\\
&&+\int_{E_Y}\int_{\mathbb{R}}\int_{E_X}\int_0^t
\mathbb{P}^{^{(k)}\pi^1,^{(k)}\pi^2}_{(t,x,\mu)}
(\sum_{m=1}^{n+1}\int_{(S_m-u)\wedge (t-u)}^{(S_{m+1}-u)\wedge (t-u)} r(X(s+u),Y(s+u),\\
&&~~A(s+u),B(s+u))ds\leq \lambda'\mid S_0=0,T_0=t,X_0=x,Y_0=y,\Lambda_0=\lambda,\\
&&A_0=a,B_0=b,S_1=u,T_1=t-u,X_1=x',Y_1=y',\Lambda_1=\lambda')\\
&&\times \Upsilon(t,x,\mu,a,b,u,t-u,x')(dy',d\lambda')
Q^X(du,dx'\mid x,\mu^Y,a,b)\bigg]\\
&=&\sum_{a\in A(x), b\in B(x)}\phi_k(a\mid t,x,\mu)\psi_k(b\mid t,x,\mu)
\big[\int_{E_Y}\int_{\mathbb{R}}\mathbb{I}_{[0,\lambda]}(r(x,y,a,b)t)\\
&&~~(1-Q^X(t,E_X\mid x,y,a,b))\mu(dy,d\lambda)\\
&&+\mathbb{E}_{(t-u,x',\Upsilon)}^{^{(k+1)}\pi^1,^{(k+1)}\pi^2}
\bigg[\int_{E_X}\int_0^t\mathbb{P}_{(t-u,x',\Upsilon)}^{^{(k+1)}\pi^1,^{(k+1)}\pi^2}
(\sum_{m=0}^{n}\int_{(S_m)\wedge (t-u)}^{(S_{m+1})\wedge (t-u)} r(X(s),Y(s),\\
&&~~A(s),B(s))ds\leq \lambda'\mid S_0=0,T_0=t-u,X_0=x',Y_0=y',\Lambda_0=\lambda')\big]\\
&&\times Q^X(du,dx'\mid x,\mu^Y,a,b)\bigg]\\
&=&\sum_{a\in A(x), b\in B(x)}\phi_k(a\mid t,x,\mu)\psi_k(b\mid t,x,\mu)
\big[\int_{E_Y}\int_{\mathbb{R}}\mathbb{I}_{[0,\lambda]}(r(x,y,a,b)t)\\
&&~~(1-Q^X(t,E_X\mid x,y,a,b))\mu(dy,d\lambda)\\
&&+\int_{E_X}\int_0^t F_n^{^{(k+1)}\pi^1,^{(k+1)}\pi^2}
(t-u,x',\Upsilon(t,x,\mu,a,b,u,t-u,x'))Q^X(du,dx'\mid x,\mu^Y,a,b).
\end{eqnarray*}
Hence, for all $k=0,1,\cdots$, we have
\begin{eqnarray}\label{lem1-1}
F_{n+1}^{^{(k)}\pi^1,^{(k)}\pi^2}(t,x,\mu)
=T^{\phi_k,\psi_k}F_n^{^{(k+1)}\pi^1,^{(k+1)}\pi^2}
(t,x,\mu)
\end{eqnarray}
Then, letting $k=0$ in (\ref{lem1-1}), we can obtain the first assertion in part (a).
Moreover, since $\lim_{n\to\infty}F_{n}^{\pi^1,\pi^2}=F^{\pi^1,\pi^2}$,
we can complete this part.
\\(b) Taking $n\to\infty$ in (\ref{lem1-1}), by the dominated convergence theorem, we can get part (b).
\\(c) This statement is obviously true.
\end{proof}

\begin{lemma}\label{lem2}
 If a sequence $\{G_k,k\ge0\}$ of $G_k\in\mathscr{F}_m$ satisfies that for each $k\ge0$,
\begin{eqnarray}\label{lem2-1}
u(t,x,\mu)
=T^{\phi_k,\psi_k}u(t,x,\mu)~
\forall (t,x,\mu)\in \mathbb{R}_+\times E_X\times \mathscr{P}(E_X\times \mathbb{R}),
 \end{eqnarray}
then
$G_k(t,x,\mu)=F^{^{(k)}\pi^1,^{(k)}\pi^2}(t,x,\mu)$ for all $(t,x,\mu)\in \mathbb{R}_+\times E_X\times \mathscr{P}(E_X\times \mathbb{R})$ and $k\ge 0$.
\end{lemma}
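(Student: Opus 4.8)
The plan is to read the hypothesis (\ref{lem2-1}) as the backward recursion $G_k=T^{\phi_k,\psi_k}G_{k+1}$ for every $k\ge 0$, which is exactly the recursion that, by Lemma \ref{lem1}(b), is satisfied by $\bar F_k:=F^{{}^{(k)}\pi^1,{}^{(k)}\pi^2}$. The statement is thus a uniqueness assertion: two $[0,1]$-valued sequences obeying this common recursion must coincide. The mechanism I would exploit is that $T^{\phi,\psi}$ is an \emph{affine} operator whose linear part is a positive, sub-probabilistic kernel, so iterating the recursion squeezes the discrepancy $|G_k-\bar F_k|$ into a tail event whose probability vanishes under Assumption \ref{ass1}.

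First I would split the operator as $T^{\phi,\psi}H=c^{\phi,\psi}+L^{\phi,\psi}H$, where the $H$-independent boundary term is
\[c^{\phi,\psi}(t,x,\mu)=\sum_{a\in A(x),b\in B(x)}\phi(a)\psi(b)\int_{E_Y}\int_{\mathbb{R}}\mathbb{I}_{[0,\lambda]}(r(x,y,a,b)t)\,(1-Q^X(t,E_X\mid x,y,a,b))\,\mu(dy,d\lambda),\]
and the continuation part is the positive linear operator
\[L^{\phi,\psi}H(t,x,\mu)=\sum_{a\in A(x),b\in B(x)}\phi(a)\psi(b)\int_{E_X}\int_0^t H\big(t-u,x',\Upsilon(t,x,\mu,a,b,u,t-u,x')\big)\,Q^X(du,dx'\mid x,\mu^Y,a,b).\]
Because $c^{\phi,\psi}$ cancels in a difference and $L^{\phi,\psi}$ is positive, for any $H_1,H_2\in\mathscr{F}_m$ one has the pointwise bound $|T^{\phi,\psi}H_1-T^{\phi,\psi}H_2|\le L^{\phi,\psi}|H_1-H_2|$. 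Applying this with $H_1=G_{k+1}$, $H_2=\bar F_{k+1}$ (using $G_k=T^{\phi_k,\psi_k}G_{k+1}$ and Lemma \ref{lem1}(b)) and iterating $n$ times gives
\[|G_k-\bar F_k|\le L^{\phi_k,\psi_k}\cdots L^{\phi_{k+n-1},\psi_{k+n-1}}\,|G_{k+n}-\bar F_{k+n}|\le L^{\phi_k,\psi_k}\cdots L^{\phi_{k+n-1},\psi_{k+n-1}}\mathbf{1},\]
where the last step uses that every element of $\mathscr{F}_m$ is valued in $[0,1]$, so $|G_{k+n}-\bar F_{k+n}|\le\mathbf{1}$.

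It then remains to show that the iterated kernel tends to $0$. I would establish, by induction on $n$ and the same measure-disintegration used in the proof of Theorem \ref{thm1} (the update $\Upsilon$ together with the marginal kernel $Q^X$ reproduces precisely the law of one sojourn-and-transition, and Theorem \ref{thm1} guarantees $\mu$ is the genuine conditional law of $(Y,\Lambda)$), the identity
\[L^{\phi_k,\psi_k}\cdots L^{\phi_{k+n-1},\psi_{k+n-1}}\mathbf{1}\,(t,x,\mu)=\mathbb{P}^{{}^{(k)}\pi^1,{}^{(k)}\pi^2}_{(t,x,\mu)}(S_n\le t).\]
This identification is the main obstacle, since it requires re-running the disintegration of Theorem \ref{thm1} through the composition of $n$ continuation kernels; alternatively one may avoid the exact equality and merely bound the iterated kernel above by $\mathbb{P}^{{}^{(k)}\pi^1,{}^{(k)}\pi^2}_{(t,x,\mu)}(S_n\le t)$, which is all that is needed. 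Finally, for $t\le T$ the events $\{S_n\le t\}$ decrease in $n$ to $\{S_\infty\le t\}\subseteq\{S_\infty\le T\}$, which is null by Assumption \ref{ass1}; hence $\mathbb{P}^{{}^{(k)}\pi^1,{}^{(k)}\pi^2}_{(t,x,\mu)}(S_n\le t)\to0$ by continuity from above. Letting $n\to\infty$ in the displayed bound yields $|G_k-\bar F_k|\equiv 0$, i.e. $G_k=F^{{}^{(k)}\pi^1,{}^{(k)}\pi^2}$ for every $k\ge0$ and every $(t,x,\mu)$, as claimed.
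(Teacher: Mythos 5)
Your proof is correct, but it follows a genuinely different route from the paper's. The paper does not argue by uniqueness at all: it unfolds the recursion $G_k=T^{\phi_k,\psi_k}G_{k+1}$ directly, $(N-k)$ times, keeping track of the accumulated boundary terms and showing they assemble into $\mathbb{E}\big[\mathbb{I}_{[0,\lambda]}\big(\int_0^{t}r\,ds\big)\mathbb{I}_{\{S_{N-k}>t\}}\big]$, with a remainder $\mathbb{E}\big[G_N(\cdots)\mathbb{I}_{\{S_{N-k}\le t\}}\big]$; letting $N\to\infty$, Assumption \ref{ass1} and dominated convergence kill the remainder and the first term converges to $F^{{}^{(k)}\pi^1,{}^{(k)}\pi^2}$, so the paper reconstructs the value of $F$ from scratch without ever invoking Lemma \ref{lem1}(b). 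You instead treat the claim as a uniqueness statement for the backward recursion: $\bar F_k$ solves it by Lemma \ref{lem1}(b), and the affine structure $T^{\phi,\psi}=c^{\phi,\psi}+L^{\phi,\psi}$ with $L^{\phi,\psi}$ positive lets you squeeze $|G_k-\bar F_k|$ under the iterated kernel applied to $\mathbf{1}$. What your route buys is that the boundary terms cancel identically in the difference, so you never have to verify that they accumulate into the indicator of the reward event — the most computation-heavy part of the paper's proof; what it costs is (i) dependence on Lemma \ref{lem1}(b), and (ii) the identification (or upper bound) $L^{\phi_k,\psi_k}\cdots L^{\phi_{k+n-1},\psi_{k+n-1}}\mathbf{1}(t,x,\mu)\le\mathbb{P}^{{}^{(k)}\pi^1,{}^{(k)}\pi^2}_{(t,x,\mu)}(S_n\le t)$, which you correctly flag as the remaining work. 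That identification is a disintegration of exactly the type already carried out in the proof of Lemma \ref{lem1}(a) (with the reward indicator replaced by the constant $1$, hence simpler), so it is not a gap, merely deferred routine work. One further small point in your favor: you state explicitly that the vanishing of $\mathbb{P}(S_n\le t)$ is guaranteed for $t\le T$, the horizon for which Assumption \ref{ass1} is formulated; the paper's own passage to the limit silently needs the same restriction.
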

\begin{proof} For convenience, we denote $t_{i+1}:=t_i-\theta_{i+1}$ if $t_i>\theta_{i+1}$ for all $i\ge 0$.
By (\ref{lem2-1}) we have for each $k\ge0$,
\begin{eqnarray*}
&&G_k(t_k,x_k,\mu_k)\\
&=&\sum_{a_k\in A(x_k), b_k\in B(x_k)}\phi_k(a_k\mid t_k,x_k,\mu_k)\psi_k(b_k\mid t_k,x_k,\mu_k)
\big[\int_{E_Y}\int_{\mathbb{R}}\mathbb{I}_{[0,\lambda_k]}(r(x_k,y_k,a_k,b_k)t_k)\\
&&~~(1-Q^X(t_k,E_X\mid x_k,y_k,a_k,b_k))\mu_k(dy_k,d\lambda_k)\\
&&+\int_{E_X}\int_0^{t_k} G_{k+1}
(t_k-\theta_{k+1},x_{k+1},
\Upsilon(t_k,x_k,\mu_k,a_k,b_k,\theta_{k+1},t_k-\theta_{k+1},x_{k+1}))\\
&&~~Q^X(d\theta_{k+1},dx_{k+1}\mid x_k,\mu_k^Y,a_k,b_k)\big]\\
&=&\mathbb{E}^{^{(k)}\pi^1,^{(k)}\pi^2}_{(t_k,x_k,\mu_k)}
\big[\mathbb{I}_{[0,\infty)}(\lambda_k -r(x_k,y_k,a_k,b_k)t_k)\mathbb{I}_{\{\theta_{k+1}>t_k\}}\big]\\
&&+\mathbb{E}^{^{(k)}\pi^1,^{(k)}\pi^2}_{(t_k,x_k,\mu_k)}
\big[G_{k+1}(t_k-\theta_{k+1},x_{k+1},\mu_{k+1})\mathbb{I}_{\{\theta_{k+1}\leq t_k\}}\big],
\end{eqnarray*}
where $\mu_{k+1}=\Upsilon(t_k,x_k,\mu_k,a_k,b_k,\theta_{k+1},t_k-\theta_{k+1},x_{k+1})$ mentioned above satisfying the filter equation (\ref{filter}) with $\mu_0=\mu$.
\\
Then, by $G_{k+1}(t_{k+1},x_{k+1},\mu_{k+1})=T^{\phi_{k+1}\psi_{k+1}}
G_{k+2}(t_{k+2},x_{k+2},\mu_{k+2})$, we can derive that
\begin{eqnarray}\label{lem2-2}
&&G_k(t_k,x_k,\mu_k)\nonumber\\
&=&\sum_{a_k\in A(x_k), b_k\in B(x_k)}\phi_k(a_k\mid t_k,x_k,\mu_k)\psi_k(b_k\mid t_k,x_k,\mu_k)\nonumber\\
&&~~\big[\int_{E_Y}\int_{\mathbb{R}}\mathbb{I}_{[0,\lambda_k]}(r(x_k,y_k,a_k,b_k)t_k)
(1-Q^X(t_k,E_X\mid x_k,y_k,a_k,b_k))\mu_k(dy_k,d\lambda_k)\nonumber\\
&&+\int_{E_X}\int_0^{t_k}
G_{k+1}(t_{k+1},x_{k+1},\mu_{k+1})
\mathbb{I}_{\{t_k-\theta_{k+1}\}}(t_{k+1})
Q^X(d\theta_{k+1},dx_{k+1}\mid x_k,\mu_k^Y,a_k,b_k)\big]\nonumber\\
&=&\mathbb{E}^{^{(k)}\pi^1,^{(k)}\pi^2}_{(t_k,x_k,\mu_k)}
\big[\mathbb{I}_{[0,\infty)}(\lambda_k -r(x_k,y_k,a_k,b_k)t_k)\mathbb{I}_{\{\theta_{k+1}>t_k\}}\big]\nonumber\\
&&+\sum_{a_k\in A(x_k), b_k\in B(x_k)}\phi_k(a_k\mid t_k,x_k,\mu_k)\psi_k(b_k\mid t_k,x_k,\mu_k)\int_{E_X}\int_0^{t_k}\nonumber\\
&&\sum_{a_{k+1}\in A(x_{k+1}), b_{k+1}\in B(x_{k+1})}\phi_{k+1}(a_{k+1}\mid t_{k+1},x_{k+1},\mu_{k+1})\psi_{k+1}(b_{k+1}\mid t_{k+1},x_{k+1},\mu_{k+1})\nonumber\\
&&~~\int_{E_Y}\int_{\mathbb{R}}
\mathbb{I}_{[0,\lambda_{k+1}]}(r(x_{k+1},y_{k+1},a_{k+1},b_{k+1})t_{k+1})\nonumber\\
&&(1-Q^X(t_{k+1},E_X\mid x_{k+1},y_{k+1},a_{k+1},b_{k+1}))\mu_{k+1}(dy_{k+1},d\lambda_{k+1})\nonumber\\
&&\mathbb{I}_{\{t_k-\theta_{k+1}\}}(t_{k+1})
Q^X(d\theta_{k+1},dx_{k+1}\mid x_k,\mu_k^Y,a_k,b_k)\nonumber\\
&&+\sum_{a_k\in A(x_k), b_k\in B(x_k)}\phi_k(a_k\mid t_k,x_k,\mu_k)\psi_k(b_k\mid t_k,x_k,\mu_k)\int_{E_X}\int_0^{t_k}\nonumber\\
&&\sum_{a_{k+1}\in A(x_{k+1}), b_{k+1}\in B(x_{k+1})}\phi_{k+1}(a_{k+1}\mid t_{k+1},x_{k+1},\mu_{k+1})\psi_{k+1}(b_{k+1}\mid t_{k+1},x_{k+1},\mu_{k+1})\nonumber\\
&&\int_{E_X}\int_0^{t_{k+1}}G_{k+2}(t_{k+2},x_{k+2},\mu_{k+2})
\mathbb{I}_{\{t_{k+1}-\theta_{k+2}\}}(t_{k+2})
Q^X(d\theta_{k+2},dx_{k+2}\mid x_{k+1},\mu_{k+1}^Y,a_{k+1},b_{k+1})\nonumber\\
&&\mathbb{I}_{\{t_k-\theta_{k+1}\}}(t_{k+1})
Q^X(d\theta_{k+1},dx_{k+1}\mid x_k,\mu_k^Y,a_k,b_k)\nonumber\\
&=&\mathbb{E}^{^{(k)}\pi^1,^{(k)}\pi^2}_{(t_k,x_k,\mu_k)}
\big[\mathbb{I}_{[0,\infty)}(\lambda_k -r(x_k,y_k,a_k,b_k)t_k)\mathbb{I}_{\{\theta_{k+1}>t_k\}}\big]\nonumber\\
&&+\mathbb{E}^{^{(k)}\pi^1,^{(k)}\pi^2}_{(t_k,x_k,\mu_k)}
\big[\mathbb{I}_{[0,\infty)}(\lambda_k -r(x_{k},y_{k},a_{k},b_{k})\theta_{k+1}
-r(x_{k+1},y_{k+1},a_{k+1},b_{k+1})(t_k-\theta_{k+1}))\nonumber\\
&&\mathbb{I}_{\{\theta_{k+1}<t_k,\theta_{k+2}>t_k-\theta_{k+1}\}}\big]\nonumber\\
&&+\mathbb{E}^{^{(k)}\pi^1,^{(k)}\pi^2}_{(t_k,x_k,\mu_k)}
\big[G_{k+2}(t_{k+2},x_{k+2},\mu_{k+2})\mathbb{I}_{\{\theta_{k+1}+\theta_{k+2}\leq t_k\}}\big]\nonumber\\
&=&\mathbb{E}^{^{(k)}\pi^1,^{(k)}\pi^2}_{(t_k,x_k,\mu_k)}
\big[\mathbb{I}_{[0,\lambda_k]}( \int_0^{t_k}r(X(s),Y(s),A(s),B(s))ds)
\mathbb{I}_{\{\theta_{k+1}+\theta_{k+2}>t_k\}}\big]\nonumber\\
&&+\mathbb{E}^{^{(k)}\pi^1,^{(k)}\pi^2}_{(t_k,x_k,\mu_k)}
\big[G_{k+2}(t_{k+2},x_{k+2},\mu_{k+2})\mathbb{I}_{\{\theta_{k+1}+\theta_{k+2}\leq t_k\}}\big].
\end{eqnarray}
Iterating (\ref{lem2-2}) $(N-k)$ times and we arrive at the following equation
\begin{eqnarray*}
G_k(t_k,x_k,\mu_k)
&=&\mathbb{E}^{^{(k)}\pi^1,^{(k)}\pi^2}_{(t_k,x_k,\mu_k)}
\big[\mathbb{I}_{[0,\lambda_k]}( \int_0^{t_k}r(X(s),Y(s),A(s),B(s))ds)
\mathbb{I}_{\{\sum_{m=1}^{N-k}\theta_{k+m}>t_k\}}\big]\nonumber\\
&&+\mathbb{E}^{^{(k)}\pi^1,^{(k)}\pi^2}_{(t_k,x_k,\mu_k)}
\big[G_N(t_k-\sum_{m=1}^{N-k-1}\theta_{k+m},x_N,\mu_N)
\mathbb{I}_{\{\sum_{m=1}^{N-k}\theta_{k+m}\leq t_k\}}\big].
\end{eqnarray*}
Let $N\to\infty$, by Assumption \ref{ass1} and the dominated convergence theorem, we have
\begin{eqnarray*}
G_k(t,x,\mu)&=&\lim_{N\to\infty}
\big(\mathbb{E}^{^{(k)}\pi^1,^{(k)}\pi^2}_{(t_k,x_k,\mu_k)}
\big[\mathbb{I}_{[0,\lambda]}( \int_0^{t}r(X(s),Y(s),A(s),B(s))ds)
\mathbb{I}_{\{S_N>t\}}\big]\nonumber\\
&&+\mathbb{E}^{^{(k)}\pi^1,^{(k)}\pi^2}_{(t_k,x_k,\mu_k)}
\big[G_N(t-\sum_{m=1}^{N}\theta_{m},x_N,\mu_N)
\mathbb{I}_{\{S_N\leq t\}}\big]\big)\\
&=&\mathbb{P}^{^{(k)}\pi^1,^{(k)}\pi^2}_{(t_k,x_k,\mu_k)}
\big( \int_0^{t}r(X(s),Y(s),A(s),B(s))ds\leq \Lambda_0)\nonumber\\
&=&F^{^{(k)}\pi^1,^{(k)}\pi^2}(t,x,\mu).
\end{eqnarray*}
Thus, Lemma \ref{lem2} has been proved.
\end{proof}

\begin{theorem}\label{Comparison}
(Comparison Theorem) Under Assumption \ref{ass1}, the following assertions hold.
\\(a) For any fixed  $\pi^{2}=\{\psi_k\}\in\Pi^2_M$,  if a sequence $\{G_k,k\ge0\}$ of $G_k\in\mathscr{F}_m$ satisfies that
\begin{eqnarray*}
&&G_k(t,x,\mu)\\
&=&\inf_{\phi\in\mathscr{P}(A(x))}\bigg\{
\sum_{a\in A(x), b\in B(x)}\phi(a)\psi_k(b\mid x,\mu)\big[\int_{E_Y}\int_{\mathbb{R}}
\mathbb{I}_{[0,\lambda]}(r(x,y,a,b)t)(1-Q^X(t,E_X\mid x,y,a,b))\nonumber\\
&&\times\mu(dy,d\lambda)+\int_{E_X}\int_0^t G_{k+1}(t-\theta,x',\Upsilon(t,x,\mu,a,b,\theta,t-\theta,x'))
Q^X(du,dx'\mid x,\mu^Y,a,b)\big]\bigg\},
\end{eqnarray*}
for all $(t,x,\mu)\in \mathbb{R}_+\times E_X\times \mathscr{P}(E_X\times \mathbb{R})$ and $k\ge 0$,  then $G_k(t,x,\mu)\leq F^{\pi^1,^{(k)}\pi^2}(t,x,\mu)$ $\forall (t,x,\mu)\in \mathbb{R}_+\times E_X\times \mathscr{P}(E_X\times \mathbb{R})$, $k\ge 0$ and $\pi^1\in\Pi^1$.
\\(b) For any fixed  $\pi^{1}=\{\phi_k\}\in\Pi^1_M$,  if a sequence $\{G_k,k\ge0\}$ of $G_k\in\mathscr{F}_m$ satisfies that
\begin{eqnarray*}
	&&G_k(t,x,\mu)\\
	&=&\sup_{\psi\in\mathscr{P}(B(x))}\bigg\{
	\sum_{a\in A(x), b\in B(x)}\phi_k(a\mid x,\mu)\psi(b)\big[\int_{E_Y}\int_{\mathbb{R}}
	\mathbb{I}_{[0,\lambda]}(r(x,y,a,b)t)(1-Q^X(t,E_X\mid x,y,a,b))\nonumber\\
	&&\times\mu(dy,d\lambda)+\int_{E_X}\int_0^t G_{k+1}(t-\theta,x',\Upsilon(t,x,\mu,a,b,\theta,t-\theta,x'))
	Q^X(du,dx'\mid x,\mu^Y,a,b)\big]\bigg\},
\end{eqnarray*}
for all $(t,x,\mu)\in \mathbb{R}_+\times E_X\times \mathscr{P}(E_X\times \mathbb{R})$ and $k\ge 0$, then $G_k(t,x,\mu)\ge F^{^{(k)}\pi^1,\pi^2}(t,x,\mu)$ $\forall (t,x,\mu)\in \mathbb{R}_+\times E_X\times \mathscr{P}(E_X\times \mathbb{R})$, $k\ge 0$ and $\pi^2\in\Pi^2$.
\end{theorem}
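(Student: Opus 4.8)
The plan is to prove part (a) by a monotone unfolding that copies the iteration in the proof of Lemma \ref{lem2}, with each equality weakened to the one-sided inequality supplied by the infimum; part (b) is then the mirror image. Fix $\pi^2=\{\psi_k\}\in\Pi_M^2$ and an arbitrary $\pi^1\in\Pi^1$. The first step is to turn the fixed-point relation defining $\{G_k\}$ into a pointwise one-step inequality along trajectories. Because $G_k(t,x,\mu)$ is the infimum over $\phi\in\mathscr{P}(A(x))$ of the bracketed expression, and because at every realized observable history $i_k$ the kernel $\phi:=\pi_k^1(\cdot\mid\mu,i_k)$ is itself a member of $\mathscr{P}(A(x_k))$, substituting this particular $\phi$ into the infimum yields
\[
G_k(t_k,x_k,\mu_k)\le T^{\phi,\psi_k}G_{k+1}(t_k,x_k,\mu_k),
\]
where $\mu_k$ is the filter value generated from $\mu$ along $i_k$ through (\ref{filter}). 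This is the decisive point that makes the argument valid for history-dependent $\pi^1$ and not merely for Markov policies: the infimum dominates whatever (possibly history-dependent) randomization $\pi^1$ uses at that stage.

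Next I would iterate this inequality exactly as (\ref{lem2-2}) is iterated in Lemma \ref{lem2}. The operator value $T^{\phi,\psi_k}G_{k+1}(t_k,x_k,\mu_k)$ coincides with the one-step conditional expectation of $G_{k+1}$ at the next decision epoch under $\mathbb{P}^{\pi^1,{}^{(k)}\pi^2}_{(t,x,\mu)}$---this is the identity established in the proof of Lemma \ref{lem1} and relies on Theorem \ref{thm1}, which identifies $\mu_k$ with the conditional law of $(Y_k,\Lambda_k)$. Applying the tower property stage by stage, after $(N-k)$ steps I obtain
\[
G_k(t,x,\mu)\le \mathbb{E}^{\pi^1,{}^{(k)}\pi^2}_{(t,x,\mu)}\Big[\mathbb{I}_{[0,\lambda]}\Big(\int_0^{t}r(X(s),Y(s),A(s),B(s))\,ds\Big)\mathbb{I}_{\{S_{N-k}>t\}}\Big]+\mathbb{E}^{\pi^1,{}^{(k)}\pi^2}_{(t,x,\mu)}\big[G_N(\cdots)\,\mathbb{I}_{\{S_{N-k}\le t\}}\big].
\]
Every equality in the original unfolding becomes an inequality in this same direction, since at each stage the one-step inequality is inserted inside a nonnegative conditional expectation.

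Finally I would let $N\to\infty$. As every $G_N\in\mathscr{F}_m$ is valued in $[0,1]$, the tail term is bounded by $\mathbb{P}^{\pi^1,{}^{(k)}\pi^2}_{(t,x,\mu)}(S_{N-k}\le t)$, which vanishes by Assumption \ref{ass1} (no accumulation of jumps within the finite horizon), while the leading term converges to $F^{\pi^1,{}^{(k)}\pi^2}(t,x,\mu)$ by dominated convergence, precisely as in Lemma \ref{lem2}. Hence $G_k(t,x,\mu)\le F^{\pi^1,{}^{(k)}\pi^2}(t,x,\mu)$ for all $\pi^1\in\Pi^1$ and $k\ge0$, which is (a). Part (b) runs through the same scheme with the infimum over $\mathscr{P}(A(x))$ replaced by the supremum over $\mathscr{P}(B(x))$: for fixed $\pi^1\in\Pi_M^1$ and arbitrary $\pi^2\in\Pi^2$ the supremum now dominates $\pi_k^2(\cdot\mid\mu,i_k)$, so the one-step relation reverses to $G_k\ge T^{\phi_k,\psi}G_{k+1}$ with $\psi=\pi_k^2(\cdot\mid\mu,i_k)$, and the identical iteration delivers $G_k\ge F^{{}^{(k)}\pi^1,\pi^2}$. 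I expect the main obstacle to be the bookkeeping of the unfolding against a genuinely history-dependent opponent: one must verify that the one-step inequality may be applied pointwise at each realized history and that the operator $T^{\phi,\psi_k}$ there truly equals the corresponding one-step conditional expectation, so that the inequalities propagate cleanly through the nested expectations while preserving both measurability and the $[0,1]$ bound on which the tail estimate depends.
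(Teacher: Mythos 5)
Your proposal is correct and follows essentially the same route as the paper: the paper's proof of part (a) likewise inserts the actual (possibly history-dependent) randomization of $\pi^1$ into the infimum to get a one-step inequality, unfolds it exactly as in Lemma \ref{lem2} to reach the two-term bound with the tail $G_{N+k}$, and then lets $N\to\infty$ using Assumption \ref{ass1} and dominated convergence, with part (b) argued symmetrically. In fact your write-up makes explicit the two points the paper leaves implicit (that the one-step domination holds pointwise at every realized history, and that $T^{\phi,\psi_k}$ agrees with the one-step conditional expectation via Theorem \ref{thm1} and Lemma \ref{lem1}), so it is a faithful, slightly more detailed version of the paper's own argument.
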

\begin{proof}
(a) For any fixed  $\pi^{2}=\{\psi_k\}\in\Pi^2_M$ and arbitrary $\pi_1\in\Pi^1$, using the similar technique as Lemma \ref{lem2}, we obtain
\begin{eqnarray*}
G_k(t_k,x_k,\mu_k)
&\leq&\mathbb{E}^{\pi^1,^{(k)}\pi^2}_{(t_k,x_k,\mu_k)}
\big[\mathbb{I}_{[0,\infty)}(\lambda_k- \sum_{m=k}^{N}r(x_m,y_m,f_m,g_m)(\theta_{m+1}\wedge t_m))
\mathbb{I}_{\{\sum_{m=k}^{N+k}\theta_{m+1}>t_k\}}\big]\nonumber\\
&&+\mathbb{E}^{\pi^1,^{(k)}\pi^2}_{(t_k,x_k,\mu_k)}
\big[G_{N+k}(t_k-\sum_{m=k}^{N+k}\theta_{m},x_{N+k},\mu_{N+k})
\mathbb{I}_{\{\sum_{m=k}^{N+k}\theta_{m+1}\leq t_k\}}\big].
\end{eqnarray*}
As the proof in Lemma \ref{lem2}, we get $G_k(t,x,\mu)\leq F^{\pi^1,^{(k)}\pi^2}(t,x,\mu)$ for all $(t,x,\mu)\in \mathbb{R}_+\times E_X\times \mathscr{P}(E_X\times \mathbb{R})$, $k\ge 0$ and $\pi^1\in\Pi^1$.
\\(b) It can be proved similarly.
\end{proof}

Next we  show the existence of the value of the game and Nash equilibrium.
\begin{theorem}\label{thm2}
(a) Suppose that Assumption \ref{ass1} holds.
Then the game has a value which is the unique solution satisfying the Shapley equation (\ref{thm2-1}); that is, for every $(t,x,\mu)\in \mathbb{R}_+\times E_X\times \mathscr{P}(E_X\times \mathbb{R})$,
\begin{eqnarray}\label{thm2-1}
V(t,x,\mu)&=&\inf_{\phi\in\mathscr{P}(A(x))}\sup_{\psi\in\mathscr{P}(B(x))}
\bigg\{\sum_{a\in A(x),b\in B(x)}\bigg[\int_{E_Y}\int_{\mathbb{R}}
\mathbb{I}_{[0,\lambda]}(r(x,y,a,b)t)\nonumber\\
&&~~(1-Q^X(t,E_X\mid x,y,a,b))\mu(dy,d\lambda)\nonumber\\
&&+\int_{E_X}\int_0^t V(t-\theta,x',\Upsilon(t,x,\mu,a,b,\theta,t-\theta,x'))
Q^X(du,dx'\mid x,\mu^Y,a,b)\bigg]\phi(a)\psi(b)\bigg\},\nonumber\\
\end{eqnarray}
(b) There exists a Nash equilibrium $(\phi^*,\psi^*)\in \Pi_S^1\times \Pi_S^2$ satisfying $\forall (t,x,\mu)\in \mathbb{R}_+\times E_X\times \mathscr{P}(E_X\times \mathbb{R})$,
\begin{eqnarray*}
V(t,x,\mu)=T^{\phi^*,\psi^*}V(t,x,\mu)
=\min_{\phi\in\mathscr{P}(A(x))}T^{\phi,\psi^*}V(t,x,\mu)
=\max_{\psi\in\mathscr{P}(B(x))}T^{\phi^*,\psi}V(t,x,\mu).
\end{eqnarray*}
\end{theorem}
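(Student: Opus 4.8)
The plan is to construct a solution of the Shapley equation \eqref{thm2-1} by iterating the minimax operator $T$, to identify the resulting fixed point with both the lower and upper values via the comparison theorem (Theorem \ref{Comparison}), and to extract a stationary equilibrium by selecting a matrix-game saddle point measurably. Concretely, I would set $V_{-1}(t,x,\mu):=\int_{E_Y}\int_{\mathbb{R}}\mathbb{I}_{[0,\infty)}(\lambda)\mu(dy,d\lambda)$ and $V_n:=TV_{n-1}$ for $n\ge 0$. Each operator $T^{\varphi_1,\varphi_2}$ maps $\mathscr{F}_m$ into itself and is monotone, and a routine check gives $V_0\le V_{-1}$, so by monotonicity of $T$ the sequence $\{V_n\}$ is nonincreasing; being bounded in $[0,1]$, it converges pointwise to some $V\in\mathscr{F}_m$.

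The central analytic step is to verify the fixed-point equation $V=TV$, which is precisely \eqref{thm2-1}. I would pass to the limit in $V_n=TV_{n-1}$. The first (boundary) term of $T^{\varphi_1,\varphi_2}$ does not involve $V_{n-1}$ and is therefore unchanged, while the transition term $\int_{E_X}\int_0^t V_{n-1}(t-u,x',\Upsilon(t,x,\mu,a,b,u,t-u,x'))\,Q^X(du,dx'\mid x,\mu^Y,a,b)$ converges to its analogue with $V$ by the dominated convergence theorem, all integrands lying in $[0,1]$. Since $A(x)$ and $B(x)$ are \emph{finite}, the inner problem at each $(t,x,\mu)$ is a finite matrix game whose value depends continuously on its payoff entries, so the nested $\inf_{\varphi_1}\sup_{\varphi_2}$ commutes with the monotone limit; hence $TV_{n-1}\to TV$ and $V=TV$.

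For fixed $(t,x,\mu)$ the map $(\varphi_1,\varphi_2)\mapsto T^{\varphi_1,\varphi_2}V(t,x,\mu)$ is bilinear on the product of simplices $\mathscr{P}(A(x))\times\mathscr{P}(B(x))$, so the classical minimax theorem gives $\inf_{\varphi_1}\sup_{\varphi_2}=\sup_{\varphi_2}\inf_{\varphi_1}$ together with a saddle point. Because the payoff entries vary measurably in $(t,x,\mu)$, a measurable selection theorem furnishes stationary kernels $\phi^*\in\Pi_S^1$ and $\psi^*\in\Pi_S^2$ attaining the saddle at every state, so that $V=T^{\phi^*,\psi^*}V=\min_{\phi\in\mathscr{P}(A(x))}T^{\phi,\psi^*}V=\max_{\psi\in\mathscr{P}(B(x))}T^{\phi^*,\psi}V$, the displayed identity of part (b).

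It remains to identify $V$ with the game value and to prove uniqueness, and this is where Theorem \ref{Comparison} enters. Fixing player 2 at $\psi^*$, the saddle identity makes the constant sequence $G_k\equiv V$ satisfy the infimum equation of Theorem \ref{Comparison}(a), whence $V\le F^{\pi^1,\psi^*}$ for every $\pi^1\in\Pi^1$; taking $\inf_{\pi^1}$ and noting that $\psi^*$ is one admissible choice in the outer supremum gives $V\le\underline V$. Symmetrically, fixing player 1 at $\phi^*$ and invoking Theorem \ref{Comparison}(b) yields $V\ge F^{\phi^*,\pi^2}$ for all $\pi^2$, hence $V\ge\overline V$. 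Combined with $\underline V\le\overline V$ this forces $\underline V=\overline V=V$, so $V$ is the value of the game and $(\phi^*,\psi^*)$ is a Nash equilibrium. Uniqueness follows the same route: any other $W\in\mathscr{F}_m$ solving \eqref{thm2-1} admits its own saddle $(\phi_W^*,\psi_W^*)$, and the two comparison inequalities squeeze $W$ between $\overline V$ and $\underline V$, forcing $W=V$. I expect the main obstacle to be the justification of $V=TV$—interchanging the monotone limit with the nested $\inf$–$\sup$ and the kernel integration—together with the measurable selection of $\phi^*,\psi^*$ over the uncountable augmented state space $\mathbb{R}_+\times E_X\times\mathscr{P}(E_Y\times\mathbb{R})$; the finiteness of $A(x)$ and $B(x)$ is what makes both tractable.
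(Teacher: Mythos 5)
Your proposal is correct and follows essentially the same route as the paper's proof: iterate the minimax operator from the terminal function $\int_{E_Y}\int_{\mathbb{R}}\mathbb{I}_{[0,\infty)}(\lambda)\mu(dy,d\lambda)$, use monotonicity and boundedness in $[0,1]$ to obtain a nonincreasing pointwise limit in $\mathscr{F}_m$, verify that the limit solves the Shapley equation, produce a stationary saddle point via a minimax theorem plus measurable selection (the paper uses Fan's theorem and Nowak's selection lemma), and then apply both halves of Theorem \ref{Comparison} with the constant sequence $G_k\equiv V$ to squeeze the limit between $\underline{V}$ and $\overline{V}$. The one place you genuinely diverge is the verification of $V=TV$: the paper proves the two inequalities separately, getting $TV\le V$ from monotonicity and the reverse inequality by selecting, for each $k$, a maximizer $\psi^{k*}\in\mathscr{P}(B(x))$ and extracting a convergent subsequence $\psi^{k*}\to\psi^*$ in the compact simplex before passing to the limit; you instead observe that, the action sets being finite, $TV_{n-1}$ is the value of a finite matrix game whose entries converge to those of $TV$ by dominated convergence, and the matrix-game value is (Lipschitz) continuous in its entries. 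Your argument is cleaner and avoids the subsequence extraction entirely. You also make the uniqueness claim explicit (any other solution $W$ of \eqref{thm2-1} admits its own saddle point and is squeezed by the same comparison argument), whereas the paper leaves this step implicit in its identification $u^*=\underline{V}=\overline{V}=V$; spelling it out as you do is a genuine improvement in completeness.
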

\begin{proof}
Fix arbitrary $(t,x,\mu)\in \mathbb{R}_+\times E_X\times \mathscr{P}(E_X\times \mathbb{R})$ and each integer $k\ge -1$, we define
\begin{eqnarray}\label{thm2-2}
u^{-1}(t,x,\mu)&:=&\int_{\mathbb{R}}
\mathbb{I}_{[0,\infty)}(\lambda)\mu^{\Lambda}(d\lambda),\nonumber\\
u^{0}(t,x,\mu)&:=&\inf_{\phi\in\mathscr{P}(A(x))}\sup_{\psi\in\mathscr{P}(B(x))}
\bigg\{\sum_{a\in A(x),b\in B(x)}\bigg[\int_{E_Y}\int_{\mathbb{R}}
\mathbb{I}_{[0,\lambda]}(r(x,y,a,b)t)\nonumber\\
&&~~(1-Q^X(t,E_X\mid x,y,a,b))\mu(dy,d\lambda)\nonumber\\
&&+\int_{E_X}\int_0^t u^{-1}(t-\theta,x',\Upsilon(t,x,\mu,a,b,\theta,t-\theta,x'))
Q^X(d\theta,dx'\mid x,\mu^Y,a,b)\bigg]\phi(a)\psi(b)\bigg\},\nonumber\\
&&~~~\cdots\nonumber\\
&&~~~\cdots\nonumber\\
u^{k+1}(t,x,\mu)&:=&\inf_{\phi\in\mathscr{P}(A(x))}\sup_{\psi\in\mathscr{P}(B(x))}
\bigg\{\sum_{a\in A(x),b\in B(x)}\bigg[\int_{E_Y}\int_{\mathbb{R}}
\mathbb{I}_{[0,\lambda]}(r(x,y,a,b)t)\nonumber\\
&&~~(1-Q^X(t,E_X\mid x,y,a,b))\mu(dy,d\lambda)\nonumber\\
&&+\int_{E_X}\int_0^t u^{k}(t-\theta,x',\Upsilon(t,x,\mu,a,b,\theta,t-\theta,x'))
Q^X(d\theta,dx'\mid x,\mu^Y,a,b)\bigg]\phi(a)\psi(b)\bigg\},\nonumber\\
\end{eqnarray}
Now we claim that for any fixed $t\ge -1$, $u^t(\cdot,\cdot,\cdot)\in\mathscr{F}_m$ for all $t\ge -1$.
\\It is clear that $u^{-1}(t,x,\mu)=\int_{\mathbb{R}}
\mathbb{I}_{[0,\infty)}(\lambda)\mu^{\Lambda}(d\lambda)\in\mathscr{F}_m$.
We assume that $u^k(\cdot,\cdot,\cdot)\in\mathscr{F}_m$ for some $k\ge -1$ and each $n\in\mathbb{N}_0$.
Then we consider the following function:
\begin{eqnarray}\label{thm2-3}
&&G(t,x,\mu,u^k,\phi,\psi)\nonumber\\
&:=&\sum_{a\in A(x),b\in B(x)}\bigg[\int_{E_Y}\int_{\mathbb{R}}
\mathbb{I}_{[0,\lambda]}(r(x,y,a,b)t)(1-Q^X(t,E_X\mid x,y,a,b))\mu(dy,d\lambda)\nonumber\\
&&+\int_{E_X}\int_0^t u^{k}(t-\theta,x',\Upsilon(t,x,\mu,a,b,\theta,t-\theta,x'))
Q^X(d\theta,dx'\mid x,\mu^Y,a,b)\bigg]\phi(a)\psi(b).
\end{eqnarray}
Since (\ref{thm2-3}) are continuous on $A(x)$ and $B(x)$, and the convergence on $\mathscr{P}(A(x))$ and $\mathscr{P}(B(x))$ are weak convergence of probability measures, by Theorem 2.8.1 in \cite{ash2000probability},
(\ref{thm2-3}) are continuous on $\mathscr{P}(A(x))$ and $\mathscr{P}(B(x))$.
Moreover, by the finiteness of $A(x)$ and $B(x)$, we know that $\mathscr{P}(A(x))$ and $\mathscr{P}(B(x))$ are compact.
Thus, according to Theorem  A.2.3 in \cite{ash2000probability}, we derive that the function (\ref{thm2-3}) attains its supremum and infimum,
that is,
\begin{eqnarray*}
\inf_{\phi\in\mathscr{P}(A(x))}\sup_{\psi\in\mathscr{P}(B(x))}
G(t,x,\mu,u^k,\phi,\psi)=
\min_{\phi\in\mathscr{P}(A(x))}\max_{\psi\in\mathscr{P}(B(x))}
G(t,x,\mu,u^k,\phi,\psi).
\end{eqnarray*}
Moreover, by the minimax measurable selection theorem (see for instant Lemma 4.3 in \cite{nowak1984zero}), there exists an $(\phi_k,\psi_k)\in \Phi\times \Psi$ such that
\begin{eqnarray}\label{thm2-4}
u^{k+1}(t,x,\mu)&:=&
\sum_{a\in A(x),b\in B(x)}\bigg[\int_{E_Y}\int_{\mathbb{R}}
\mathbb{I}_{[0,\lambda]}(r(x,y,a,b)t)(1-Q^X(t,E_X\mid x,y,a,b))\mu(dy,d\lambda)\nonumber\\
&&+\int_{E_X}\int_0^t u^{k}(t-\theta,x',\Upsilon(t,x,\mu,a,b,\theta,t-\theta,x'))
Q^X(d\theta,dx'\mid x,\mu^Y,a,b)\bigg]\nonumber\\
&&~~\phi_k(a\mid t,x,\mu)\psi_k(b\mid t,x,\mu).
\end{eqnarray}
Thus, $u^{k+1}(t,x,\mu)\in\mathscr{F}_m$ and we have proved the claim above.
\\Next we prove the fact that for every $(t,x,\mu)\in \mathbb{R}_+\times E_X\times \mathscr{P}(E_X\times \mathbb{R})$ and $k\ge-1$,
\begin{eqnarray*}
u^{k+1}(t,x,\mu)\leq u^{k}(t,x,\mu).
\end{eqnarray*}
We will prove that by induction. When $k=-1$, for each $(t,x,\mu)\in \mathbb{R}_+\times E_X\times \mathscr{P}(E_X\times \mathbb{R})$,
\begin{eqnarray*}
&&u^0(t,x,\mu)\\
&:=&\inf_{\phi\in\mathscr{P}(A(x))}\sup_{\psi\in\mathscr{P}(B(x))}
\bigg\{\sum_{a\in A(x),b\in B(x)}
\bigg[\int_{E_Y}\int_{\mathbb{R}}
\mathbb{I}_{[0,\lambda]}(r(x,y,a,b)t)(1-Q^X(t,E_X\mid x,y,a,b))\mu(dy,d\lambda)\nonumber\\
&&+\int_{E_X}\int_0^t\int_{\mathbb{R}}\mathbb{I}_{[0,\infty)}(\lambda') \Upsilon(t,x,\mu,a,b,\theta,t-\theta,x')(E_Y,d\lambda')
Q^X(d\theta,dx'\mid x,\mu^Y,a,b)\bigg]\phi(a)\psi(b)\bigg\},\nonumber\\
&=&\inf_{\phi\in\mathscr{P}(A(x))}\sup_{\psi\in\mathscr{P}(B(x))}
\bigg\{\sum_{a\in A(x),b\in B(x)}
\bigg[\int_{E_Y}\int_{\mathbb{R}}
\mathbb{I}_{[0,\lambda]}(r(x,y,a,b)t)(1-Q^X(t,E_X\mid x,y,a,b))\mu(dy,d\lambda)\nonumber\\
&&+\int_{E_X}\int_0^t\int_{\mathbb{R}}\mathbb{I}_{[0,\infty)}(\lambda') \frac{\int_{E_Y}\int_{\mathbb{R}}\int_{E_Y}q(\theta,x',y'\mid x,y,a,b)\nu(dy')\delta_{\{\lambda-r(x,y,a,b)\theta\}}(d\lambda')
\mathbb{I}_{[0,\infty)}\mu(dy,d\lambda)}{\int_{E_Y}q^X(\theta,x'\mid x,y,a,b)\mu^Y(dy)}\\
&&
Q^X(d\theta,dx'\mid x,\mu^Y,a,b)\bigg]\phi(a)\psi(b)\bigg\}\nonumber\\
&=&\inf_{\phi\in\mathscr{P}(A(x))}\sup_{\psi\in\mathscr{P}(B(x))}
\{\sum_{a\in A(x),b\in B(x)}\int_{E_Y}\int_{\mathbb{R}}
\mathbb{I}_{[0,\lambda]}(r(x,y,a,b)t)\times 1\times \mu(dy,d\lambda)\phi(a)\psi(b)\}\\
&\leq&\int_{\mathbb{R}}
\mathbb{I}_{[0,\infty)}(\lambda)\mu^{\Lambda}(d\lambda)=u^{-1}(t,x,\mu),
\end{eqnarray*}
together with the definition of $\{u^k\}$ and the monotonicity of the operator $T$, we derive that for each $k\ge-1$, $u^{k+1}\leq u^{k}$. 
That is $\{u^k\}$ is the non-increasing sequence and thus it converges to some function $u^*\in\mathscr{F}_m$.
  
Next we will prove that $u^*(t,x,\mu)$ satisfies the Shapley equation, that is,
\begin{eqnarray}\label{thm2-5}
&&u^*(t,x,\mu)\nonumber\\
&=&\inf_{\phi\in\mathscr{P}(A(x))}\sup_{\psi\in\mathscr{P}(B(x))}
\bigg\{\sum_{a\in A(x),b\in B(x)}\bigg[\int_{E_Y}\int_{\mathbb{R}}
\mathbb{I}_{[0,\lambda]}(r(x,y,a,b)t)(1-Q^X(t,E_X\mid x,y,a,b))\mu(dy,d\lambda)\nonumber\\
&&+\int_{E_X}\int_0^t u^*(t-\theta,x',\Upsilon(t,x,\mu,a,b,\theta,t-\theta,x'))
Q^X(du,dx'\mid x,\mu^Y,a,b)\bigg]\phi(a)\psi(b)\bigg\}.
\end{eqnarray}
By $u^{k+1}(t,x,\mu)\leq u^{k}(t,x,\mu)$ and $\lim\limits_{k\to\infty}u^{k}(t,x,\mu)
:=u^*(t,x,\mu)$, we know that the right side of (\ref{thm2-5}) satisfying that for every $(t,x,\mu)\in \mathbb{R}_+\times E_X\times \mathscr{P}(E_X\times \mathbb{R})$ and $k\ge0$,
\begin{eqnarray*}
&&\inf_{\phi\in\mathscr{P}(A(x))}\sup_{\psi\in\mathscr{P}(B(x))}
\bigg\{\sum_{a\in A(x),b\in B(x)}\bigg[\int_{E_Y}\int_{\mathbb{R}}
\mathbb{I}_{[0,\lambda]}(r(x,y,a,b)t)(1-Q^X(t,E_X\mid x,y,a,b))\mu(dy,d\lambda)\nonumber\\
&&+\int_{E_X}\int_0^t u^*(t-\theta,x',\Upsilon(t,x,\mu,a,b,\theta,t-\theta,x'))
Q^X(du,dx'\mid x,\mu^Y,a,b)\bigg]\phi(a)\psi(b)\bigg\},\nonumber\\
&\leq&\inf_{\phi\in\mathscr{P}(A(x))}\sup_{\psi\in\mathscr{P}(B(x))}
\bigg\{\sum_{a\in A(x),b\in B(x)}\bigg[\int_{E_Y}\int_{\mathbb{R}}
\mathbb{I}_{[0,\lambda]}(r(x,y,a,b)t)(1-Q^X(t,E_X\mid x,y,a,b))\mu(dy,d\lambda)\nonumber\\
&&+\int_{E_X}\int_0^t u^k(t-\theta,x',\Upsilon(t,x,\mu,a,b,\theta,t-\theta,x'))
Q^X(du,dx'\mid x,\mu^Y,a,b)\bigg]\phi(a)\psi(b)\bigg\},\nonumber\\
&=&u^{k+1}(t,x,\mu),
\end{eqnarray*}
which implies that
\begin{eqnarray}\label{thm2-6}
&&\inf_{\phi\in\mathscr{P}(A(x))}\sup_{\psi\in\mathscr{P}(B(x))}
\bigg\{\sum_{a\in A(x),b\in B(x)}\bigg[\int_{E_Y}\int_{\mathbb{R}}
\mathbb{I}_{[0,\lambda]}(r(x,y,a,b)t)(1-Q^X(t,E_X\mid x,y,a,b))\mu(dy,d\lambda)\nonumber\\
&&+\int_{E_X}\int_0^t u^*(t-\theta,x',\Upsilon(t,x,\mu,a,b,\theta,t-\theta,x'))
Q^X(du,dx'\mid x,\mu^Y,a,b)\bigg]\phi(a)\psi(b)\bigg\}\nonumber\\
&\leq& u^*(t,x,\mu).
\end{eqnarray}
Now we show the reverse inequality. It follows from (\ref{thm2-2}) that
\begin{eqnarray}\label{thm2-7}
&&u^{k+1}(t,x,\mu)\nonumber\\
&=&\inf_{\phi\in\mathscr{P}(A(x))}\sup_{\psi\in\mathscr{P}(B(x))}
\bigg\{\sum_{a\in A(x),b\in B(x)}\bigg[\int_{E_Y}\int_{\mathbb{R}}
\mathbb{I}_{[0,\lambda]}(r(x,y,a,b)t)(1-Q^X(t,E_X\mid x,y,a,b))\mu(dy,d\lambda)\nonumber\\
&&+\int_{E_X}\int_0^t u^k(t-\theta,x',\Upsilon(t,x,\mu,a,b,\theta,t-\theta,x'))
Q^X(du,dx'\mid x,\mu^Y,a,b)\bigg]\phi(a)\psi(b)\bigg\}\nonumber\\
&\leq&\sup_{\psi\in\mathscr{P}(B(x))}
\bigg\{\sum_{a\in A(x),b\in B(x)}\bigg[\int_{E_Y}\int_{\mathbb{R}}
\mathbb{I}_{[0,\lambda]}(r(x,y,a,b)t)(1-Q^X(t,E_X\mid x,y,a,b))\mu(dy,d\lambda)\nonumber\\
&&+\int_{E_X}\int_0^t u^k(t-\theta,x',\Upsilon(t,x,\mu,a,b,\theta,t-\theta,x'))
Q^X(du,dx'\mid x,\mu^Y,a,b)\bigg]\phi(a)\psi(b)\bigg\}\nonumber\\
&=&
\sum_{a\in A(x),b\in B(x)}\bigg[\int_{E_Y}\int_{\mathbb{R}}
\mathbb{I}_{[0,\lambda]}(r(x,y,a,b)t)(1-Q^X(t,E_X\mid x,y,a,b))\mu(dy,d\lambda)\nonumber\\
&&+\int_{E_X}\int_0^t u^k(t-\theta,x',\Upsilon(t,x,\mu,a,b,\theta,t-\theta,x'))
Q^X(du,dx'\mid x,\mu^Y,a,b)\bigg]\phi(a)\psi^{k*}(b\mid t,x,\mu),\nonumber\\
\end{eqnarray}
for any $\phi\in\mathscr{P}(A(x))$, where the existence of $\psi^{k*}\in\mathscr{P}(B(x))$ (may depend on $\phi$) is guaranteed by the measurable selection theorem \citep{nowak1984zero}.
According to the compactness of $\mathscr{P}(B(x))$, without loss of generality, we suppose that $\psi^{k*}(\cdot\mid t,x,\mu)\to \psi^*(\cdot\mid t,x,\mu)\in\mathscr{P}(B(x))$.
Taking $k\to\infty$ in (\ref{thm2-7}), it can be obtained that
\begin{eqnarray*}
&&u^*(t,x,\mu)\nonumber\\
&\leq&\sum_{a\in A(x),b\in B(x)}\bigg[\int_{E_Y}\int_{\mathbb{R}}
\mathbb{I}_{[0,\lambda]}(r(x,y,a,b)t)(1-Q^X(t,E_X\mid x,y,a,b))\mu(dy,d\lambda)\nonumber\\
&&+\int_{E_X}\int_0^t u^*(t-\theta,x',\Upsilon(t,x,\mu,a,b,\theta,t-\theta,x'))
Q^X(du,dx'\mid x,\mu^Y,a,b)\bigg]\phi(a)\psi^*(b\mid t,x,\mu)\nonumber\\
&\leq&\sup_{\psi\in\mathscr{P}(B(x))}
\bigg\{\sum_{a\in A(x),b\in B(x)}\bigg[\int_{E_Y}\int_{\mathbb{R}}
\mathbb{I}_{[0,\lambda]}(r(x,y,a,b)t)(1-Q^X(t,E_X\mid x,y,a,b))\mu(dy,d\lambda)\nonumber\\
&&+\int_{E_X}\int_0^t u^*(t-\theta,x',\Upsilon(t,x,\mu,a,b,\theta,t-\theta,x'))
Q^X(du,dx'\mid x,\mu^Y,a,b)\bigg]\phi(a)\psi(b)\bigg\}
\end{eqnarray*}
for any $\phi\in\mathscr{P}(A(x))$.
Thus, we have
\begin{eqnarray}\label{thm2-8}
&&u^*(t,x,\mu)\nonumber\\
&\leq&\inf_{\phi\in\mathscr{P}(A(x))}\sup_{\psi\in\mathscr{P}(B(x))}
\bigg\{\sum_{a\in A(x),b\in B(x)}\bigg[\int_{E_Y}\int_{\mathbb{R}}
\mathbb{I}_{[0,\lambda]}(r(x,y,a,b)t)(1-Q^X(t,E_X\mid x,y,a,b))\mu(dy,d\lambda)\nonumber\\
&&+\int_{E_X}\int_0^t u^*(t-\theta,x',\Upsilon(t,x,\mu,a,b,\theta,t-\theta,x'))
Q^X(du,dx'\mid x,\mu^Y,a,b)\bigg]\phi(a)\psi(b)\bigg\},
\end{eqnarray}
which together with (\ref{thm2-6}) gets (\ref{thm2-5}). So we have proved that $u^*(t,x,\mu)$ satisfies the Shapley equation.
To verify (\ref{thm2-1}), it suffices to show $u^*(t,x,\mu)=V(t,x,\mu)$.
\\By (\ref{thm2-3}),
$G(t,x,\mu,u^*,\phi,\psi)$ are continuous on $\mathscr{P}(A(x))$ and $\mathscr{P}(B(x))$ and the compactness of $\mathscr{P}(A(x))$ and $\mathscr{P}(B(x))$. Moreover, $G(t,x,\mu,u^*,\phi,\psi)$ is convex in $\phi\in\mathscr{P}(A(x))$ and concave in $\psi\in\mathscr{P}(B(x))$.
Hence, Fan's minimax theorem \citep{fan1953minimax} and the measurable selection theorem \citep{nowak1984zero} ensure that for each $(t,x,\mu)\in \mathbb{R}_+\times E_X\times \mathscr{P}(E_X\times \mathbb{R})$ and $k\ge0$,
\begin{eqnarray}
&&u^*(t,x,\mu)\nonumber\\
&=&\inf_{\phi\in\mathscr{P}(A(x))}\sup_{\psi\in\mathscr{P}(B(x))}
\bigg\{\sum_{a\in A(x),b\in B(x)}\bigg[\int_{E_Y}\int_{\mathbb{R}}
\mathbb{I}_{[0,\lambda]}(r(x,y,a,b)t)(1-Q^X(t,E_X\mid x,y,a,b))\mu(dy,d\lambda)\nonumber\\
&&+\int_{E_X}\int_0^t u^*(t-\theta,x',\Upsilon(t,x,\mu,a,b,\theta,t-\theta,x'))
Q^X(du,dx'\mid x,\mu^Y,a,b)\bigg]\phi(a)\psi(b)\bigg\}\nonumber\\
&=&\min_{\phi\in\mathscr{P}(A(x))}\max_{\psi\in\mathscr{P}(B(x))}
\bigg\{\sum_{a\in A(x),b\in B(x)}\bigg[\int_{E_Y}\int_{\mathbb{R}}
\mathbb{I}_{[0,\lambda]}(r(x,y,a,b)t)(1-Q^X(t,E_X\mid x,y,a,b))\mu(dy,d\lambda)\nonumber\\
&&+\int_{E_X}\int_0^t u^*(t-\theta,x',\Upsilon(t,x,\mu,a,b,\theta,t-\theta,x'))
Q^X(du,dx'\mid x,\mu^Y,a,b)\bigg]\phi(a)\psi(b)\bigg\}\nonumber\\
&=&\max_{\psi\in\mathscr{P}(B(x))}\min_{\phi\in\mathscr{P}(A(x))}
\bigg\{\sum_{a\in A(x),b\in B(x)}\bigg[\int_{E_Y}\int_{\mathbb{R}}
\mathbb{I}_{[0,\lambda]}(r(x,y,a,b)t)(1-Q^X(t,E_X\mid x,y,a,b))\mu(dy,d\lambda)\nonumber\\
&&+\int_{E_X}\int_0^t u^*(t-\theta,x',\Upsilon(t,x,\mu,a,b,\theta,t-\theta,x'))
Q^X(du,dx'\mid x,\mu^Y,a,b)\bigg]\phi(a)\psi(b)\bigg\}\nonumber\\
&=&\max_{\psi\in\mathscr{P}(B(x))}
\bigg\{\sum_{a\in A(x),b\in B(x)}\bigg[\int_{E_Y}\int_{\mathbb{R}}
\mathbb{I}_{[0,\lambda]}(r(x,y,a,b)t)(1-Q^X(t,E_X\mid x,y,a,b))\mu(dy,d\lambda)\nonumber\\
&&+\int_{E_X}\int_0^t u^*(t-\theta,x',\Upsilon(t,x,\mu,a,b,\theta,t-\theta,x'))
Q^X(du,dx'\mid x,\mu^Y,a,b)\bigg]\phi^*(a\mid t,x,\mu)\psi(b)\bigg\}\label{thm2-10}\\
&=&\min_{\phi\in\mathscr{P}(A(x))}
\bigg\{\sum_{a\in A(x),b\in B(x)}\bigg[\int_{E_Y}\int_{\mathbb{R}}
\mathbb{I}_{[0,\lambda]}(r(x,y,a,b)t)(1-Q^X(t,E_X\mid x,y,a,b))\mu(dy,d\lambda)\nonumber\\
&&+\int_{E_X}\int_0^t u^*(t-\theta,x',\Upsilon(t,x,\mu,a,b,\theta,t-\theta,x'))
Q^X(du,dx'\mid x,\mu^Y,a,b)\bigg]\phi(a)\psi^*(b\mid t,x,\mu)\bigg\}\label{thm2-11}\\
&=&\sum_{a\in A(x),b\in B(x)}\phi^*(a\mid t,x,\mu)\psi^*(b\mid t,x,\mu)\bigg[\int_{E_Y}\int_{\mathbb{R}}
\mathbb{I}_{[0,\lambda]}(r(x,y,a,b)t)(1-Q^X(t,E_X\mid x,y,a,b))\nonumber\\
&&\times\mu(dy,d\lambda)+\int_{E_X}\int_0^t u^*(t-\theta,x',\Upsilon(t,x,\mu,a,b,\theta,t-\theta,x'))
Q^X(du,dx'\mid x,\mu^Y,a,b)\bigg]\label{thm2-12}
\end{eqnarray}
Under Assumption \ref{ass1}, by (\ref{thm2-11}) and Theorem \ref{Comparison}(a), we can obtain that
\begin{eqnarray*}
u^*(t,x,\mu)\leq F^{\pi^{1},\psi^{*}}(t,x,\mu)~\forall~ (t,x,\mu)\in \mathbb{R}_+\times E_X\times \mathscr{P}(E_X\times \mathbb{R})~ and ~\pi^1\in\Pi^1,
\end{eqnarray*}
which implies that
\begin{eqnarray}\label{thm2-13}
u^*(t,x,\mu)
\leq \inf_{\pi^{1}\in\Pi^1}F^{\pi^{1},\psi^{*}}(t,x,\mu)
\leq \sup_{\pi^{2}\in\Pi^2}\inf_{\pi^{1}\in\Pi^1}F^{\pi^{1},\pi^{2}}(t,x,\mu)
=\underline{V}(t,x,\mu).
\end{eqnarray}
Using Theorem \ref{Comparison}(b) and (\ref{thm2-10}), a similar discussion shows that
\begin{eqnarray}\label{thm2-14}
u^*(t,x,\mu)
\ge\sup_{\pi^{2}\in\Pi^2}F^{\pi^{1},\psi^{*}}(t,x,\mu)
\ge \inf_{\pi^{1}\in\Pi^1}\sup_{\pi^{2}\in\Pi^2}F^{\pi^{1},\pi^{2}}(t,x,\mu)
=\overline{V}(t,x,\mu).
\end{eqnarray}
Hence, $u^*(t,x,\mu)=\underline{V}(t,x,\mu)=\overline{V}(t,x,\mu)=V(t,x,\mu)$. 
\\(b). This is a direct results by (\ref{thm2-13})-(\ref{thm2-14}).
\end{proof}
\section{Conclusion}\label{sec5}
In this paper, we study the optimization of
POSMGs with a risk probability criterion. The objective is  to find a Nash equilibrium to
minimize the probability that the accumulated rewards of player 1 (i.e., the incurred costs of player 2)  fall short of a given goal during a finite horizon. Extending the state space by the joint conditional distribution of current unobserved state and the remaining  goal, the POSMGs can be reformulated with the corresponding completely observable problem.
Then we establish a comparison theorem and derive the Shapley equation, and based on which, we prove the existence of the value of the game and a Nash equilibrium.

One of the future research topics is
to study risk criteria in   the framework of N-person nonzero-sum stochastic games with the partially observable perspective. The objective is to minimize the probability of total rewards that do not exceed the profit goal for each player and find the existences of the Nash equilibrium, which is not reported in the literature yet.

\end{document}